\newcommand{\R}{\mathbb{R}}
\newcommand{\N}{\mathbb{N}}
\newcommand{\s}{\mathbb{S}^1}
\newcommand{\T}{\mathbb{T}^2}
\newcommand{\8}{\infty}
\newcommand{\la}{\lambda}
\newtheorem{thm}{Theorem}[section]
\newtheorem{lem}[thm]{Lemma}
\newtheorem{conj}{Conjecture}
\begin{document}

\title{Invariant measures for Cherry flows}
\author{Radu Saghin}
\address{Centre de Recerca Matematica, Apartat 50, Bellaterra, Barcelona, 08193, Spain}
\email{rsaghin@gmail.com}
\author{Edson Vargas}
\address{IME-USP Sao Paulo, Rua do Matao 1010, Cidade Universitaria, Sao Paulo, Brazil}
\email{vargas@ime.usp.br}

\begin{abstract}
We investigate the invariant probability measures for Cherry flows, i.e. flows on the two-torus which have a saddle, a source, and no other fixed points, closed orbits or homoclinic orbits. In the case when the saddle is dissipative or conservative we show that the only invariant probability measures are the Dirac measures at the two fixed points, and the Dirac measure at the saddle is the physical measure. In the other case we prove that there exists also an invariant probability measure supported on the quasi-minimal set, we discuss some situations when this other invariant measure is the physical measure, and conjecture that this is always the case. The main techniques used are the study of the integrability of the return time with respect to the invariant measure of the return map to a closed transversal to the flow, and the study of the close returns near the saddle.
\end{abstract}
\date{\today}
\maketitle

\section{Introduction and results}

An important part in understanding a dynamical system consists in understanding its invariant measures and their properties. A special role is played by the physical measures, because they describe the statistical properties of a large set of trajectories. A general dynamical systems may have infinitely many complicated invariant measures in a robust way (if it has a horseshoe for example); a system may also have infinitely many or no physical measures, however it is conjectured that generically there are finitely many physical measures and that they behave 'nice' (the Palis conjecture, see \cite{Pa} for example).

For homeomorphisms in dimension one the situation is less complicated. The homeomorphisms of the circle and the interval have the invariant measures supported on periodic orbits or a minimal set, while the physical measures are supported on periodic attractors, or on the minimal set, or they don't exist. For homeomorphisms on surfaces and non-invertible maps in dimension one the situation may be already much more complicated (there may be horseshoes, the physical measure may be supported even at a repeller). The flows on surfaces are situated somewhere in between these two situations. If there are no fixed points then the dynamics can be reduced to a homeomorphism of the circle. If there are attracting fixed points or periodic orbits, then one has physical measures supported on them. If there are fixed points and non-trivial recurrence then interesting new situations can appear, and we studied some of them in \cite{SSV} - stopping a transitive flow at a point, and performing a Hamiltonian saddle-node bifurcation. The aim of this paper is to study another type of flows on surfaces with singularities and non-trivial recurrence, namely the Cherry flows. One motivation for the investigation of these specific situations is to understand the ergodic properties of flows on surfaces in general. Every flow on a surface can be broken up into some simpler pieces, some possibly corresponding to generalized Cherry flows (more saddles, sources or sinks), or corresponding to suspensions over generalized interval exchange maps, with indifferent fixed points or saddles (of course there may also be wondering regions or invariant annuli, but these are easy to understand).

We remind that if $\phi$ is a continuous flow on a compact manifold $M$, then a Borel measure $\nu$ on $M$ is called an {\it invariant measure} if $\nu(A)=\nu(\phi_t(A))$ for every measurable set $A$ and every $t\in\R$. The {\it basin of attraction} of an invariant probability measure $\nu$ is the set of points $x\in M$ such that for every continuous function $f:M\rightarrow\R$ we have
$$
\lim_{t\rightarrow\8}\int_0^tf(\phi_s(x))ds=\frac 1t\int_Mfd\nu.
$$
An invariant probability measure is called a {\it physical measure} if its basin of attraction has positive Lebesgue measure.

Throughout the paper we will consider $\phi$ to be a $C^{\8}$ (less regularity is sufficient in many cases) Cherry flow on the two-torus $\T$ with a saddle $a$ and a source $b$ (in some papers $b$ is considered to be a sink, but this would automatically correspond to the unique physical measure, which is not interesting for us). This means that the flow has no other fixed points, and has no closed or homoclinic orbits; it has a wondering domain and a quasi-minimal set (an invariant closed set such that the only invariant closed nontrivial subset is a fixed point). We assume that the eigenvalues of $\phi$ at $a$ are $\la_s<1<\la_u$.  We also consider $\s$ to be a smooth circle transversal to the Cherry flow $\phi$, with the first return map $g$. The map $g$ is a monotone circle map (with one discontinuity), and thus it has a well defined rotation number. In the case when the hyperbolic fixed point is dissipative or conservative (the divergence of the vector field at the saddle is non-positive), we obtain that the physical measure is supported at the saddle.

\begin{thm}\label{dissipative}
(Non-positive divergence at the saddle) Under the above assumptions, if $\la_s\la_u\leq 1$ (the dissipative and conservative cases), then $\delta_a$ and $\delta_b$ are the only ergodic invariant probability measures for $\phi$. Consequently the physical measure for $\phi$ is $\delta_a$, with the basin of attraction equal to $\T\setminus\{ b\}$.
\end{thm}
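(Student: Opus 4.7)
The plan is to translate the question about $\phi$-invariant probability measures into one about the first-return map $g$ on $\s$, apply uniqueness of the $g$-invariant measure, and then rule out nontrivial invariant measures by showing the return time fails to be integrable under $\la_s\la_u\leq 1$.

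Since $\phi$ has no closed orbits, the monotone degree-one map $g$ (with one discontinuity) has irrational rotation number; by the standard theory for such maps it admits a unique invariant Borel probability measure $\mu$ on $\s$, supported on the intersection of $\s$ with the quasi-minimal set. Let $\nu$ be an ergodic $\phi$-invariant probability measure different from $\delta_a$ and $\delta_b$. Then $\nu(\{a,b\})=0$, so $\nu$-a.e.\ orbit meets $\s$ infinitely often, and a standard flow-box argument represents $\nu$ as a suspension of $\mu$ under the return time $\tau$:
\[
\nu(A)=\frac{1}{\int_\s\tau\,d\mu}\int_\s\int_0^{\tau(x)}\mathbf{1}_A(\phi_s(x))\,ds\,d\mu(x).
\]
Such a $\nu$ exists as a probability measure exactly when $\int_\s\tau\,d\mu<\8$, so to rule out nontrivial $\nu$ it suffices to prove this integral is infinite.

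The core of the argument is therefore to show $\int_\s\tau\,d\mu=\8$ whenever $\la_s\la_u\leq 1$. Let $d\in\s$ be the intersection of $W^s(a)$ with $\s$ (the discontinuity of $g$); by the local normal form at the saddle, $\tau(x)\sim-(\log\la_u)^{-1}\log|x-d|$ as $x\to d$, so divergence of the integral reduces to a lower bound on $\mu$-mass of shrinking neighbourhoods of $d$. I plan to obtain this bound through a close-return analysis: the iterates $g^n$ that carry an interval adjacent to $d$ past the saddle have multipliers expressible via the linearisation at $a$ as products of powers of $\la_s$ and $\la_u$, and the $g$-invariance $\mu(I)=\mu(g^{-1}I)$ lets these multipliers be converted into lower bounds $\mu(I_n)\gtrsim c_n$ on a sequence of intervals $I_n$ near $d$ with $|I_n|\to 0$. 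The hypothesis $\la_s\la_u\leq 1$ prevents these multipliers from contracting fast enough, forcing $\sum_n n\,\mu\{x:\tau(x)\in[n,n+1)\}=\8$. This close-return analysis is the main obstacle and the technical heart of the theorem.

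Once $\delta_a$ and $\delta_b$ are the only ergodic invariant probability measures, the physical-measure statement is almost immediate. The source $b$ admits a small neighbourhood $U$ on whose boundary the vector field points strictly outward, so $\T\setminus U$ is forward-invariant and every $x\neq b$ eventually enters and stays in $\T\setminus U$. Consequently, for each $x\neq b$ the empirical measures $\nu_t(x)=t^{-1}\int_0^t\delta_{\phi_s(x)}\,ds$ assign vanishing mass to any fixed neighbourhood of $b$ as $t\to\8$. Their weak-$*$ accumulation points are $\phi$-invariant probability measures, hence by the first part convex combinations of $\delta_a$ and $\delta_b$; the vanishing of mass near $b$ kills the $\delta_b$-component, so $\nu_t(x)\to\delta_a$ for every $x\neq b$, exhibiting $\delta_a$ as the physical measure with basin $\T\setminus\{b\}$.
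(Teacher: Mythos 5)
Your reduction is the same as the paper's: translate to the first-return map $g$ on $\s$ with its unique invariant measure $\mu$, use the suspension criterion that a finite $\phi$-invariant measure off $\{a,b\}$ exists iff $\int_\s\tau\,d\mu<\8$ (the paper cites \cite{SSV} for this), and deduce the physical-measure statement from the source $b$ being a repeller. Those parts are correct and align with the paper.

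However, the core of the theorem --- proving $\int_\s\tau\,d\mu=\8$ when $\la_s\la_u\leq 1$ --- is not actually proved in your proposal; you concede it is ``the main obstacle and the technical heart'' and only gesture at a close-return multiplier analysis. The mechanism you sketch also doesn't match what is actually needed: you propose to use the $g$-invariance $\mu(I)=\mu(g^{-1}I)$ together with local multipliers near the saddle to get lower bounds $\mu(I_n)\gtrsim c_n$, but invariance alone does not convert derivative information into lower bounds on $\mu$-mass. The paper instead separates two independent estimates. First, $\mu([0,x_{q_n}])\sim 1/q_{n+1}$ is read off directly from the semi-conjugacy to the rigid rotation (the first $q_{n+1}-1$ iterates of the dynamical interval $[0,y_{q_n}]$ under $R_\rho$ are disjoint and cover more than half the circle). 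Second --- and this is the ingredient your sketch is missing --- it invokes Mendes' theorem (Theorem~\ref{expanding} in the paper): when $r\leq 1$ the inverse map $f$ admits a power $f^N$ that is uniformly expanding off a neighborhood of the flat interval. Applying this along the $q_{n+1}-1$ iterates of $[0,x_{q_n}]$, which stay disjoint from the flat interval, gives the super-polynomial bound $|x_{q_n}|\lesssim\alpha^{-q_{n+1}/N}$, hence $-\log|x_{q_n}|\gtrsim q_{n+1}$. Multiplying the two estimates yields $\int_{[0,x_{q_n}]}\tau\,d\mu\gtrsim\mu([0,x_{q_n}])\cdot(-\log|x_{q_n}|)\gtrsim 1$, a constant bounded away from zero, so the integral diverges. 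Without an argument of comparable strength for the smallness of $|x_{q_n}|$, your proposal has a genuine gap at exactly the step the hypothesis $\la_s\la_u\leq 1$ is designed to feed into.
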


If the saddle has positive divergence, then there exists another probability invariant measure supported on the quasi-minimal set.

\begin{thm}\label{non-dissipative}
(Positive divergence at the saddle) Under the above assumptions, if $\la_s\la_u>1$, then there exist exactly three ergodic invariant probability measures for $\phi$: $\delta_a$, $\delta_b$, and a third invariant probability measure $\nu$ supported on the quasi-minimal set.
\end{thm}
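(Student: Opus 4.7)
The plan is to produce the third ergodic measure $\nu$ as the suspension of the unique non-atomic $g$-invariant probability measure on $\s$; the distinction between this statement and Theorem~\ref{dissipative} then rests entirely on whether the return time $\tau:\s\to\R_+$ is integrable against that measure, and $\la_s\la_u>1$ is precisely what forces integrability. Because $\phi$ has no closed orbits the rotation number of $g$ is irrational; since $g$ is a monotone degree-one circle endomorphism with a single discontinuity $c\in\s$ (the point whose forward $\phi$-orbit coincides with the local stable separatrix of $a$), classical Denjoy-type theory for such maps yields a unique non-atomic $g$-invariant probability measure $\mu$, supported on a minimal Cantor set which the flow carries onto the quasi-minimal set.

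Assuming $\int_\s\tau\,d\mu<\8$, one then sets
$$
\nu(A) \;=\; \frac{1}{\int_\s\tau\,d\mu}\int_\s\int_0^{\tau(x)}\mathbf{1}_A(\phi_t(x))\,dt\,d\mu(x)
$$
for every Borel $A\subset\T$; the standard cross-section argument shows $\nu$ is a $\phi$-invariant probability, ergodic because $\mu$ is $g$-ergodic, supported on the quasi-minimal set, and assigning no mass to $\{a,b\}$.

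The heart of the proof is therefore the integrability of $\tau$. In a linearising chart at the saddle a trajectory entering at distance $d$ from the local stable manifold exits after time $\sim -(\log d)/\log\la_u$, so along $\s$ one has $\tau(x)\asymp -(\log|x-c|)/\log\la_u$ for $x$ near $c$. By Fubini,
$$
\int_\s\tau\,d\mu \;\asymp\; \mathrm{const} \;+\; \int_0^{\varepsilon}\frac{\mu\bigl((c-r,c+r)\bigr)}{r}\,dr,
$$
so it suffices to prove a power bound $\mu\bigl((c-r,c+r)\bigr)\leq Cr^\beta$ for some $\beta>0$. The singularity of $g$ at $c$ is of power type with exponent $\alpha=-\log\la_s/\log\la_u$, and $\la_s\la_u>1$ is exactly $\alpha<1$: equivalently, the one-sided branches of $g^{-1}$ across $c$ contract by a definite factor. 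I would construct, using the close-return combinatorics furnished by the continued-fraction denominators $q_n$ of the rotation number, a nested sequence of intervals $I_n\ni c$ obtained by pulling fundamental domains near $c^\pm=\lim_{x\to c^\pm}g(x)$ back by $g^{-q_n}$; $g$-invariance identifies $\mu(I_n)$ with $\mu$ of those outer intervals, while iterating the power-law contraction yields a uniform geometric decay $\mu(I_n)\leq C\theta^n$ along with a comparable geometric upper bound on $|I_n|$, producing the required polynomial bound on $\mu\bigl((c-r,c+r)\bigr)$.

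Uniqueness is essentially structural: any ergodic $\phi$-invariant probability $\nu'\notin\{\delta_a,\delta_b\}$ assigns zero mass to $\{a,b\}$ and full mass to the non-wandering set outside them, which lies in the quasi-minimal set; by a standard Ambrose-Kakutani cross-section argument it descends to a $g$-invariant probability on $\s$, necessarily $\mu$, so $\nu'=\nu$. The principal obstacle is the measure-decay estimate above: one must control $\mu$ of small neighborhoods of $c$ purely from the combinatorics of $\rho(g)$ and the power-law shape of $g$ at $c$, and it is the careful bookkeeping of the pullback tower that turns $\alpha<1$ into a usable quantitative bound — the rest of the argument is formally routine once this estimate is in hand.
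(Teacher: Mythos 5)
Your approach and the paper's are genuinely different, and the gap in yours lies exactly where you flag the ``principal obstacle.'' The paper does \emph{not} prove integrability of $\tau$ directly. Instead it uses a soft Jacobian argument (due to Jiagang Yang): if an ergodic invariant probability $\alpha$ were a physical measure supported in an open set $U$ on which the vector field has positive divergence, then for $x$ in the basin the Jacobian $Jac_x\phi_t$ grows without bound; setting $B(C,t)=\{x\in\mathcal B(\alpha): Jac_x\phi_s>C\ \forall s\geq t\}$, volume expansion forces $Leb(B(C,t))<1/C$, and since these sets exhaust $\mathcal B(\alpha)$ one gets $Leb(\mathcal B(\alpha))=0$, a contradiction. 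Hence $\delta_a$ cannot be physical. Since $b$ is a source, if $\delta_a$ and $\delta_b$ were the only ergodic invariant measures, then $\delta_a$ would necessarily be physical (time averages of Lebesgue-typical points would have nowhere else to accumulate). So a third ergodic measure must exist, and by the cross-section framework it corresponds to $\mu$ and lives on the quasi-minimal set. This argument is uniform in $\rho(g)$ and uses only the sign of the divergence; it sidesteps all fine estimates on $|x_{q_n}|$.

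Your direct route requires controlling $\mu((c-r,c+r))$ in terms of $r$, and the estimate you sketch does not close. You correctly note that $\mu(I_n)\sim 1/q_{n+1}$ (this is combinatorial and decays at least geometrically for any irrational $\rho$), but you then claim a ``geometric \emph{upper} bound on $|I_n|$'' and assert that together these give $\mu((c-r,c+r))\leq Cr^\beta$. That is the wrong direction: to convert the geometric bound $\mu(I_n)\leq C\theta^n$ into a polynomial bound in $r$, you need a geometric \emph{lower} bound on $|I_n|$ (equivalently, a lower bound on $|x_{q_n}|$), so that $r\geq|I_{n+1}|$ forces $n$ to be small. A lower bound on $|x_{q_n}|$ is precisely the hard metric input: the estimate $\liminf|x_{q_n}|/|x_{q_{n-2}}|>0$ from \cite{GS} (Theorem~\ref{circle} in the paper) is only available for $r>2$ and bounded-type $\rho$, and the paper explicitly invokes it only for Theorem~\ref{non-dissipative, bounded type}. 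Iterating the power-law of $f$ at the endpoints of the flat interval does not by itself yield such a lower bound for arbitrary irrational $\rho$ and arbitrary $r>1$; indeed in the dissipative case $r\leq 1$ the analogous intervals shrink like $\alpha^{-q_{n+1}/N}$, which is catastrophic, and your sketch does not explain what mechanism in $r>1$ prevents comparable super-geometric collapse. So while your overall framework (suspension construction, Ambrose--Kakutani uniqueness, reduction to $\int_\s\tau\,d\mu<\infty$) is the right bookkeeping, the key estimate is not established, and this is not a routine step — it is the entire content of the theorem, which the paper deliberately proves by a different, indirect method.
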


Under some extra conditions we can show that this other measure is in fact the physical measure of the flow.

\begin{thm}\label{non-dissipative, bounded type}
(Restricted positive divergence at the saddle, bounded type case) Under the above assumptions, $\la_s^2\la_u>1$, and the rotation number of $g$ is of bounded type, then the invariant probability measure $\nu$ supported on the quasi-minimal set is the physical measure of $\phi$, with the basin of attraction having full Lebesgue measure.
\end{thm}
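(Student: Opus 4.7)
The approach is to convert the problem into a statement about Birkhoff averages of the return map $g$ on $\s$. By Theorem~\ref{non-dissipative}, the flow-invariant measure $\nu$ is the suspension of the unique $g$-invariant probability $\mu$ by the return-time function $\tau:\s\to(0,\8]$, with $\tau\in L^1(\mu)$:
\[
\int_\T f\, d\nu \;=\; \frac{1}{\int_\s\tau\, d\mu}\int_\s\!\Big(\int_0^{\tau(x)}\!\!f(\phi_s x)\, ds\Big)\, d\mu(x),\qquad F(x):=\int_0^{\tau(x)}\!\!f(\phi_s x)\, ds.
\]
For every $x\in\s$ whose $g$-orbit is defined, the time average over $[0,T]$ along the flow is exactly $\sum_{i=0}^{N-1}F(g^ix)\big/\sum_{i=0}^{N-1}\tau(g^ix)$, where $N=N(T,x)$ counts returns up to time $T$. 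Since $W^s(a)$ is a smooth curve of zero Lebesgue measure and the first-hit map sends Lebesgue on $\T\setminus(W^s(a)\cup\{b\})$ to a measure equivalent to Lebesgue on $\s$, the whole claim reduces to showing that, for Lebesgue-a.e. $x\in\s$,
\[
\frac{1}{N}\sum_{i=0}^{N-1}\tau(g^ix)\longrightarrow\int_\s\tau\, d\mu \quad\text{and}\quad \frac{1}{N}\sum_{i=0}^{N-1}F(g^ix)\longrightarrow\int_\s F\, d\mu.
\]

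\medskip
\noindent Because $g$ has irrational rotation number it is uniquely ergodic, so Birkhoff averages of every continuous observable converge to the $\mu$-integral uniformly in $x$. Truncating $\tau_M:=\min(\tau,M)$ and applying unique ergodicity to $\tau_M$ and to $F\mathbf{1}_{\{\tau\le M\}}$ takes care of the bounded part; the linearisation of $\phi$ at $a$ gives the asymptotics $\tau(x)=\frac{1}{\log\la_u}|\log|x-p||+O(1)$ near the discontinuity $p\in\s$, so the tail $\{\tau>M\}$ sits inside $\{|x-p|<\la_u^{-M}\}$. The whole argument now rests on a quantitative close-return estimate of the form
\[
\#\{0\le i<N:|g^i x-p|<\epsilon\}\ \le\ C\,(1+N\epsilon),\qquad\text{valid for Lebesgue-a.e. $x\in\s$.}
\]
Given such a bound, summing the logarithmic return-time tail against this count (and using $|F|\le\|f\|_\8\tau$) gives
\[
\frac{1}{N}\sum_{\substack{i<N\\ \tau(g^ix)>M}}\tau(g^ix)\ \lesssim\ M\,\la_u^{-M}+\frac{(\log N)^2}{N},
\]
which tends to zero as $N\to\8$ then $M\to\8$, closing both convergences above.

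\medskip
\noindent The bounded-type rotation number is what permits pushing the three-distance theorem from the rigid rotation $R_\alpha$ to $g$ through the monotone semiconjugacy $h\circ g=R_\alpha\circ h$: on the rotation side, the orbit of any point has consecutive gaps comparable to $1/N$, so any arc of length $\eta$ is visited at most $1+CN\eta$ times. To translate this bound into the close-return count on the $g$-side with $\eta$ comparable to $\epsilon$, one needs sufficient regularity of $h$ at $p$, together with bounded distortion of the renormalization iterates $g^{q_n}$ across the critical sector near the discontinuity. This is precisely where $\la_s^2\la_u>1$ enters: it is equivalent to the saddle exponent $\sigma:=-\log\la_s/\log\la_u$ being strictly less than $\tfrac12$, which on the one hand controls the local behaviour $g(p+\epsilon)-g(p^+)\asymp\epsilon^\sigma$ and on the other hand is the threshold at which the distortion estimates propagated along the bounded-type renormalization combinatorics close up. Establishing this distortion/regularity transfer is the main technical obstacle; without $\sigma<1/2$ one cannot rule out that a set of positive Lebesgue measure of orbits returns too deeply and too often into $\la_u^{-M}$-neighbourhoods of $p$, which would accumulate a positive fraction of the time near the saddle and shift the physical measure onto $\delta_a$.
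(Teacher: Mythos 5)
Your overall strategy---reduce to Birkhoff averages of the return map, split off the bounded part by unique ergodicity, and control the logarithmic tail near the singularity via a close-return estimate---is the right circle of ideas, and you have correctly located where the hypotheses $\la_s^2\la_u>1$ and bounded type must enter. But the proposal leaves the decisive step unproved (you call it ``the main technical obstacle''), and the form you posit for the close-return bound is not what the available machinery actually gives, nor what the paper uses.

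Concretely, you assert that for Lebesgue-a.e.\ $x$, $\#\{0\le i<N:\,|g^ix-p|<\epsilon\}\le C(1+N\epsilon)$. Transporting the three-gap count from the rotation side through the semi-conjugacy $h$ at this linear rate amounts to demanding a Lipschitz-type relation between $\epsilon$-neighbourhoods of $p$ on the $g$-side and on the rotation side; that is a substantially stronger regularity assertion about $h$ than what is known. The result from \cite{GS} that the paper actually invokes is Theorem~\ref{circle}: under $r>2$ and bounded type, $\liminf|x_{q_n}|/|x_{q_{n-2}}|>0$, hence $|x_{q_n}|\ge C\alpha^n$. This is only a geometric \emph{lower} bound on the depth of the closest returns; it does not produce a $C(1+N\epsilon)$ count. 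The paper therefore counts scale by scale instead: among the first $N$ iterates, the number landing in $[x_{q_l},x_{q_{l+2}}]$ is bounded (via the rotation-side three-gap estimate and bounded type) by $CN/q_{l+1}$, each such visit costs return time $\lesssim -\log|x_{q_{l+2}}|\lesssim l$ by Theorem~\ref{circle}, and $\sum_{l\ge n_0} l/q_{l+1}\lesssim\sum_{l\ge n_0} l/\beta^l$ with $\beta=(1+\sqrt5)/2$ is a convergent tail. That renormalization-scale bookkeeping is genuinely different from a single fixed-$\epsilon$ Lipschitz bound, and your version will not close up as written. Your tail estimate $\tfrac1N\sum_{\tau(g^ix)>M}\tau(g^ix)\lesssim M\la_u^{-M}+(\log N)^2/N$ also silently requires a cutoff on how deep the first $N$ iterates return (otherwise the ``$+1$'' part of the count contributes a divergent $\sum_j j$), which you neither state nor justify.

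There is also a cleaner reduction you missed. Rather than aiming for Lebesgue-a.e.\ $x\in\s$, the paper invokes Theorem~\ref{Lebesgue}: when $r\ge1$ the wandering set has full Lebesgue measure, so a Lebesgue-typical point eventually enters $[c,d]$, and it suffices to control empirical measures along orbits of $z\in[c,d]$. For such $z$ the iterates $g^{n-1}(z)$ are semi-conjugated to the \emph{backward} rotation orbit of $0$, so the combinatorics are explicit, the closest return among the first $q_{n+1}$ iterates is exactly $|x_{q_n}|$, and the cutoff you would need above comes for free. To repair your proposal you would essentially have to reprove Theorem~\ref{circle} (or something of equal strength); the paper imports that scaling estimate directly and organizes the count around it.
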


We recently became aware that this result is also contained in an unpublished preprint by Ali Tahzibi. We need the condition that $\la_s^2\la_u>1$ and the fact that the rotation number is of bounded type in order to use some distortion estimates for circle maps with flat intervals from \cite{GS}. However we have examples of Cherry flows with the physical measure supported on the quasi-minimal set in the general case of positive divergence at the saddle.

\begin{thm}\label{non-dissipative, Liouville}
(An example with positive divergence at the saddle) If $\la_s\la_u>1$, there exists a $C^{\8}$ Cherry flow $\phi$, with $\la_s$ and $\la_u$ the eigenvalues of the saddle point $a$, such that the invariant measure $\nu$ supported on the quasi-minimal set is the physical measure of $\phi$, with the basin of attraction having full Lebesgue measure.
\end{thm}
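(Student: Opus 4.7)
The plan is to produce, for any prescribed eigenvalues $\la_s<1<\la_u$ with $\la_s\la_u>1$, a specific $C^\8$ Cherry flow on $\T$ whose rotation number $\alpha$ on a transversal $\s$ is Liouville enough, in a sense calibrated to $\la_s,\la_u$, to force the physical measure onto the quasi-minimal set. The heuristic is that sufficiently extreme Liouville behaviour of $\alpha$ takes the place of the extra hypothesis $\la_s^2\la_u>1$ used in Theorem \ref{non-dissipative, bounded type}: if the partial quotients $a_n$ of $\alpha=[a_1,a_2,\ldots]$ grow fast enough, then the scales $q_n$ of closest returns of the first-return map $g$ separate so sharply that a coarse distortion bound on the wandering intervals replaces the sharper \cite{GS} estimates.

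First I would build the family of flows. Starting from a linear flow on $\T$ with irrational slope and inserting a saddle-source pair via a standard smooth bump perturbation supported in a disk, one obtains a $C^\8$ Cherry flow whose saddle eigenvalues are prescribed $\la_s<1<\la_u$ and whose first-return map $g$ has a continuous, monotone dependence on the remaining perturbation parameters. Exploiting the intermediate-value property of the rotation number, one can then realize any prescribed continued fraction expansion $\alpha=[a_1,a_2,\ldots]$ one partial quotient at a time, while keeping the eigenvalues fixed at the saddle; the construction is inductive and leaves $a_{n+1}$ free to be chosen after $a_1,\ldots,a_n$ have been fixed.

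Next I would analyze the return dynamics. By the standard suspension correspondence, $\nu$ is the physical measure with full Lebesgue basin if and only if for Lebesgue-a.e.\ $x\in\s$ the flow's Birkhoff averages of every continuous $f$ converge to $\int f\,d\nu$; this in turn reduces to two statements about $g$, namely that Lebesgue-a.e.\ $x$ has $g$-Birkhoff averages equidistributing toward the $g$-invariant measure $\mu_g$ on the quasi-minimal set, and that the return time function $\tau:\s\ra\R_+$ has Cesaro averages along Lebesgue-a.e.\ orbit bounded by $\int\tau\,d\mu_g<\8$. Since $\tau(x)$ grows only as $-\log d(x,c)$ where $c$ is the discontinuity of $g$, the problem becomes to estimate the Lebesgue measure of those $x$ whose finite orbit $\{g^k(x)\}_{k<n}$ falls into a neighbourhood of $c$ at level $n$ of the renormalization. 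Choosing each $a_{n+1}$ large enough in terms of $\la_s,\la_u$ and $a_1,\ldots,a_n$ makes this measure summable in $n$, and Borel--Cantelli then concludes.

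The main obstacle is making the distortion estimate quantitative in the regime $\la_s\la_u>1$, $\la_s^2\la_u\leq 1$, where the argument of Theorem \ref{non-dissipative, bounded type} breaks down. The key observation is that a large $a_{n+1}$ forces $q_{n+1}\gg q_n$, so that the stable contraction factor $\la_s^{q_{n+1}}$ on the $n$-th level gap swamps the weaker exponent coming from $\la_s\la_u>1$; effectively the Liouville condition upgrades the divergence inequality scale by scale. Making this precise, pinning down exactly how fast $a_{n+1}$ must grow, and verifying that the inductive selection can indeed be performed within a single $C^\8$ Cherry flow with the prescribed eigenvalues, is the technical core of the argument.
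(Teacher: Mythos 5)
Your proposal aims at the same target (a $C^\8$ Cherry flow with prescribed saddle eigenvalues, built so that the return map's rotation number is extremely Liouville, and for which $\nu$ on the quasi-minimal set is physical with full Lebesgue basin), but the route diverges from the paper's in both the construction and the analysis, and the technical core of your analysis is a genuine gap.

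On the construction: the paper does not merely tune a parameter to realize a prescribed continued fraction. It builds a sequence of flows $\phi_n$, each having a single non-degenerate parabolic periodic orbit of rapidly growing period, obtained by generically unfolding the previous parabolic orbit and stopping at the boundary of the next mode-locking interval. This choice is not cosmetic: the whole analysis rests on the fact that at each stage the new parabolic orbit spends nearly all of its period inside an explicit tube $A_i$ around the previous parabolic orbit, and that the forward orbit of $[c,d]$ tracks the parabolic orbit closely. This gives quantitative time estimates (property (4)) that are propagated by induction, together with a uniform lower bound $|g_{a_n}^k(z)|>\delta$ (Lemma \ref{parabolic}) obtained from the positive-divergence condition via a bi-Lipschitz control of the return map past the parabolic orbit (Lemma \ref{intervals}). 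Your construction — bump-perturbing a linear flow and then adjusting the rotation number one partial quotient at a time — gives you the continued fraction, but it does not give you the structure (parabolic orbits, tubes $A_i$, bounds $t_{A_i}\geq c_i\cdots c_{k-1}t$) that the paper actually uses to control where mass accumulates.

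On the analysis: the paper never proves $\int_\s\tau\,d\mu<\8$ by a distortion estimate, and it does not invoke Borel--Cantelli. It shows directly that for any $z\in(c,d)$ and any large $t$, the fraction of time $[0,t]$ during which $\phi_0$ stays inside a fixed small ball $B(a,2\delta)$ around the saddle is less than $\epsilon$, uniformly in $t$, by comparing $\phi_0$ with some $\phi_{n_z}$ over the window $[0,t]$ and using the inductive time estimates. This gives $\alpha(B(a,\delta))<\epsilon$ for every weak-$\ast$ limit $\alpha$ of empirical measures of orbits starting in $(c,d)$, and then the ergodic decomposition forces $\alpha=\nu$. Your reduction to (i) Birkhoff equidistribution of Lebesgue-a.e.\ $g$-orbit to $\mu_g$ and (ii) integrability of $\tau$ with respect to $\mu_g$ is a reasonable general scheme, but the crucial step — the claim that ``the stable contraction factor $\la_s^{q_{n+1}}$ on the $n$-th level gap swamps the weaker exponent'' so that a ``coarse distortion bound'' replaces \cite{GS} — is left entirely unsubstantiated, and it is far from clear that it can be made to work. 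Making $q_{n+1}\gg q_n$ does shrink $\mu([0,x_{q_n}])\sim 1/q_{n+1}$, but it also lengthens the string of iterates $f^{q_{n+1}-1}$ applied to $[0,x_{q_n}]$, which (through the critical behaviour $f(x)\sim|x|^r$ at the edges of the flat interval) can make $|x_{q_n}|$ correspondingly much smaller; the two effects pull in opposite directions on $\int\tau\,d\mu$. Without a concrete scale-by-scale estimate of $|x_{q_n}|$ in the $1<r\leq 2$ regime, the integrability claim is not established, and the paper's construction is designed precisely to sidestep this difficulty by never needing such an estimate. In short, your outline identifies the right flexibility (choosing $a_{n+1}$ adaptively) but replaces the paper's two concrete lemmas with a distortion heuristic that would need its own proof and may not be true in the stated generality.
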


In this last case, the rotation number of the return map to the transversal can be made Liouvillean (it may actually always be Liouvillean in this specific construction). These two results suggest the following conjecture.

\begin{conj}\label{conjecture}
Under the above conditions, if $\la_s\la_u>1$, then the invariant probability measure supported on the quasi-minimal set is the physical measure of $\phi$.
\end{conj}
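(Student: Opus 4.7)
The plan is to extend the strategy used for Theorem \ref{non-dissipative, bounded type} from the bounded-type, $\la_s^2\la_u>1$ regime to the full range $\la_s\la_u>1$. By Theorem \ref{non-dissipative} the ergodic measure $\nu$ on the quasi-minimal set already exists, arising as the normalized suspension of the unique $g$-invariant measure $\mu$ on the minimal Cantor set under the return time $\tau$, with $\int\tau\,d\mu<\8$. What remains is to show that for Lebesgue-a.e.\ $x\in\T$ the time average of every continuous observable along the forward $\phi$-orbit of $x$ converges to $\int f\,d\nu$.

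First I would dispose of the basin of $\delta_b$: since $b$ is a source, Lebesgue-a.e.\ orbit enters the transversal $\s$ in finite time and is thereafter captured by the basin of the quasi-minimal set. For such an $x\in\s$ I would reduce the continuous-time average to the discrete Birkhoff ratio
$$
\frac{1}{T}\int_0^T f(\phi_s x)\,ds \; \approx \; \frac{\sum_{k=0}^{n-1}F(g^k x)}{\sum_{k=0}^{n-1}\tau(g^k x)},
\qquad F(y):=\int_0^{\tau(y)}f(\phi_s y)\,ds,
$$
and then invoke unique ergodicity of $g$ on the minimal set to identify each sum, after an appropriate truncation, with $\int F\,d\mu$ and $\int\tau\,d\mu$. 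A Lebesgue-a.e.\ $g$-orbit on $\s$ accumulates on the minimal set, so Birkhoff averages of bounded continuous observables converge to the corresponding $\mu$-integrals; the delicate point is that $\tau$ blows up at the discontinuity of $g$, which is the intersection of $\s$ with the stable manifold of $a$, so one must control the contribution of close returns to the saddle.

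This close-return control is the core of the argument. One has to show that the distribution of return times with respect to Lebesgue measure on $\s$ matches asymptotically the distribution with respect to $\mu$; equivalently, that for a Lebesgue-typical point the cumulative time spent in small neighbourhoods of $a$ is asymptotically equal to that along a $\mu$-typical orbit. In the bounded-type case of Theorem \ref{non-dissipative, bounded type} this is achieved via the distortion estimates from \cite{GS} for circle maps with a flat interval, which require $\la_s^2\la_u>1$ to beat the asymmetry of the cusps at the endpoints of the flat interval. The natural programme for the full conjecture is either to prove a sharper distortion principle valid under the weaker hypothesis $\la_s\la_u>1$ and for arbitrary rotation number, or to replace distortion estimates by a Yoccoz-type renormalization which decomposes orbits into closest-return cycles and bounds the Lebesgue mass captured by deep excursions near the saddle in each cycle.

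The main obstacle is exactly this quantitative step for unbounded combinatorics. In the unbounded-type case the depth of successive closest returns is not uniformly controlled and Koebe-type principles for $g$ near its flat interval degenerate; without an assumption stronger than $\la_s\la_u>1$ it is not clear even how to establish the right decay rate for the Lebesgue measure of the fundamental domains of $g$. Any successful attack on the conjecture will in my view depend on a new distortion or inducing estimate handling simultaneously arbitrary rotation numbers and the weakest possible eigenvalue hypothesis; once such an estimate is in place the remaining steps should follow the pattern of Theorem \ref{non-dissipative, bounded type}.
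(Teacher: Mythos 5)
This statement is a \emph{conjecture} in the paper, not a theorem; the authors explicitly leave it open, offering only partial evidence in the form of Theorem~\ref{non-dissipative, bounded type} (which needs the stronger hypothesis $\la_s^2\la_u>1$ and bounded-type rotation number) and the ad hoc construction in Theorem~\ref{non-dissipative, Liouville}. There is therefore no proof in the paper to compare against, and you do not actually supply one either: your final paragraph candidly concedes that under the weaker hypothesis $\la_s\la_u>1$ and unbounded combinatorics the required distortion or Koebe-type control near the flat interval is unavailable, and that a new inducing or renormalization estimate would be needed. What you have written is a plausible research programme, not a proof.

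That said, your diagnosis of the difficulty is accurate and aligns with what the paper itself implies. The reduction to the return map, the identification of $\nu$ as the suspension of $\mu$ by $\tau$, the need to show that a Lebesgue-typical orbit distributes its time near $a$ in the same way as a $\mu$-typical orbit, and the observation that the bottleneck is the behaviour of deep closest returns when the rotation number has unbounded partial quotients -- all of this is the correct framing, and it is exactly the gap the paper's Theorems~\ref{circle} and~\ref{Lebesgue} (from \cite{GS}) fill only partially. One point worth sharpening: for Theorem~\ref{non-dissipative, Liouville} the paper sidesteps distortion estimates entirely by an explicit inductive construction forcing the orbit of $[c,d]$ to shadow a sequence of parabolic orbits, via Lemma~\ref{parabolic}; a proof of the conjecture in full generality would presumably need a softer mechanism that does not rely on prescribing the combinatorics, and neither the paper nor your proposal identifies one. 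So there is no error in your reasoning, but there is also no proof: the central quantitative estimate remains, as you say, genuinely open.
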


In Section 2 we treat the case of non-positive divergence at the saddle, in Section 3 we prove Theorem \ref{non-dissipative} and Theorem \ref{non-dissipative, bounded type}, and in the last section we present the example from Theorem \ref{non-dissipative, Liouville}, and ask some further questions.

\section{The case of non-positive divergence at the saddle}

We start with some general remarks. The first return map $g$ of the Cherry flow $\phi$ to the
transversal $\s$ is $C^{\8}$ everywhere except at a point which belongs to the stable manifold of $a$, which we assume to be $0$ (we identify $\s$ with $[-1/2,1/2]|_{-1/2\sim 1/2}$), and at this point it has a
discontinuity, with the one-sided limits being $c$ and $d$. We also consider $\psi$ to be the flow obtained by reversing the direction of $\phi$, and $f$ the first return map associated to $\psi$ and $\s$. We make the convention that $f$ has a flat interval, $U=[c,d]$, containing the points from $\s$ which don't return under $\psi$, and the image of this interval is $0$. Thus $f$ is a continuous and monotone map of the circle, which is $C^{\8}$ everywhere except at $c$ and $d$. If the eigenvalues of $a$ for $\phi$ are $\la_s<1<\la_u$ then we can assume that after a change of coordinates we have $f(x)=-(c-x)^r$ on an interval $[c-\epsilon,c]$ and $f(x)=(x-d)^r$ on an interval $[d,d+\epsilon]$, where $r=-\log\la_u/\log\la_s$. We remark that the condition $\lambda_s\lambda_u\leq 1$ is equivalent to the condition $r\leq 1$. Let $\tau$ be the first return time for $\phi$ to $\s$, which will have a logarithmic singularity at $0$, meaning that $\tau(x)$ is of order $-\log|x|$ on an interval $[\epsilon,\epsilon]$. Here we say that $u(x)$ is of order $v(x)$ on some set if there exists a constant $C>0$ such that $1/C\leq u(x)/v(x)\leq C$ for every $x$ in that given set. We assume that $f$ has irrational rotation number $\rho$, and it follows that $f$ is semi-conjugated with the rigid rotation of the circle $R_{\rho}$ by a continuous map $h$, and it has a unique invariant probability measure $\mu$ supported on the unique minimal set (this is also an invariant measure for $g$), given by $\mu=h^*(Leb)$. This measure lifts to a finite invariant measure $\nu$ for $\phi$ (or $\psi$) on $\T$, not supported at the singularities, if and only if $\int_{\s}\tau d\mu<\8$ (see \cite{SSV}). Thus the finite invariant measures for $\phi$ or $\psi$ are the Dirac measures at $a$ and $b$, $\delta_a$ and $\delta_b$, and possibly $\nu$, an invariant measure supported on the quasi-minimal set, depending whether the integral $\int_{\s}\tau d\mu$ is convergent or not.

Let $x_i=f^i(0)$ and $y_i=R_{\rho}^i(0)$. Let $y_{q_n}$ be the closest returns of $0$ under
$R_{\rho}$. This means that $q_n=q_{n-2}+a_nq_{n-1}$, where $a_n$ are the terms from the continued fraction expansion for $\rho$. If we iterate the interval $[0,y_{q_n}]$ under $R_{\rho}$, the first $q_{n+1}-1$ iterates are disjoint and cover more that a half of the circle. This implies that $\mu([0,x_{q_n}])=Leb([0,y_{q_n}])$ is of the order of $\frac 1{q_{n+1}}$. In order to find if the integral $\int_{\s}\tau d\mu$ converges, we need to estimate also the size of $x_{q_n}$.

In order to prove Theorem \ref{dissipative} we use the following result from \cite{M}. We remind that $f$ is a monotone self map of the circle with a flat interval $[c,d]$, $C^{\8}$ and strictly monotone outside this interval, and of the form $f(x)=-(c-x)^r$ on $[c-\epsilon, c]$ and $f(x)=(x-d)^r$ on $[d,d+\epsilon]$.

\begin{thm}\label{expanding}
Under the above assumptions on $f$, if $r\leq 1$, then there exists $N,k\in\N$, $\alpha>1$ such that if $x\in\s\setminus\cup_{i=0}^kf^{-i}U$ then $Df^N(x)>\alpha$.
\end{thm}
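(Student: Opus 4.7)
The plan is to establish the theorem via a uniform hyperbolicity argument of the type developed in \cite{M} for $C^2$ one-dimensional maps without attracting periodic orbits, specialized to the circle map $f$ with its flat interval $U$. The key structural input is that the rotation number of $f$ is irrational, so $f$ has no periodic points; by the standard no-wandering-intervals theorem for circle maps with a flat interval, $f$ has no wandering intervals outside $\cup_{i\geq 0}f^{-i}U$ either. In particular there is no ``attractor'' in $\s\setminus U$ on which orbits accumulate with bounded derivative.

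I would first separate $\s\setminus U$ into a small neighborhood $W$ of the endpoints $\{c,d\}$ and its complement $K := \s\setminus(W\cup U)$. Using the explicit form of $f$ near $\partial U$, a direct computation gives $Df(x)=r\cdot\mathrm{dist}(x,\partial U)^{r-1}$ on $W\setminus U$, which for $r\leq 1$ is bounded below by $r>0$ there and, when $r<1$, blows up at $\partial U$. Hence for $r<1$ one can shrink $W$ to make $Df\geq L$ there for any prescribed $L>1$. On the compact set $K$ the map is $C^{\8}$ with $Df>0$, so $Df|_K$ is pinched between two positive constants (possibly both $\leq 1$) and bounded distortion of $f^n$ on short intervals staying entirely in $K$ holds automatically.

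The main step is then to consider an orbit $x,f(x),\dots,f^N(x)$ entirely avoiding $U$ and split its iterates according to whether they land in $W$ or in $K$. Each $W$-visit contributes a multiplicative factor of at least $L$, while for the $K$-segments the bounded distortion just noted, combined with the absence of attracting periodic orbits and of wandering intervals, is exactly the hypothesis required to apply the hyperbolicity result of \cite{M}, yielding expansion on sufficiently long $K$-segments. Choosing $k$ large enough so that every orbit segment of length $k+1$ avoiding $U$ accumulates enough of either type of contribution, one obtains $Df^N(x)\geq\alpha>1$ for an appropriate $N$.

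The main obstacle is the conservative case $r=1$. Here the near-boundary derivative is identically $1$, so shrinking $W$ produces no one-step expansion, and all the hyperbolicity must be extracted from the Koebe-type distortion machinery on $K$ alone, using the $C^2$ structure of $f$ and the absence of wandering intervals. This is the delicate part, and precisely the reason the authors invoke \cite{M} rather than giving a direct argument.
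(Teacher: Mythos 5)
The paper offers no proof of this statement: it is quoted verbatim as a cited result from \cite{M}, prefaced with ``we use the following result from \cite{M},'' and immediately applied in the proof of Theorem~\ref{dissipative}. So there is no in-paper argument to compare your sketch against; the relevant comparison is with the argument in Mendes' paper.

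Taken on its own terms, your sketch has a genuine gap. At the decisive step --- getting expansion along the $K$-segments, i.e.\ the orbit pieces that stay away from both $U$ and the neighborhood $W$ of $\partial U$ --- you ``apply the hyperbolicity result of \cite{M}.'' But that result \emph{is} the statement you are trying to prove, so the argument is circular. The correct structural inputs (irrational rotation number, no periodic orbits, no wandering intervals outside $\bigcup_i f^{-i}U$) are identified, but turning these into uniform expansion for a $C^2$ circle map with a flat interval is the entire content of the theorem, not a lemma one can quote on the way. Compounding this, you explicitly leave the case $r=1$ unresolved, yet $r=1$ is within the hypothesis $r\leq 1$ and is in fact the case where the boundary derivative $Df(x)=r\,\mathrm{dist}(x,\partial U)^{r-1}$ is identically $1$, so the $W$/$K$ decomposition yields no one-step gain at all and the whole weight rests on the unargued part. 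Even when $r<1$, observing $Df\geq L$ on a shrinking $W$ does not by itself give $Df^N>\alpha$: $K$-segments can contract, and you would still need a quantitative trade-off (for instance, a bound on the length of a $K$-excursion in terms of the preceding $W$-visit, or a Koebe-type distortion estimate on a fundamental domain) to show the $W$-expansion dominates. None of that is supplied, and you acknowledge as much in the closing paragraph.
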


\begin{proof}[Proof of Theorem \ref{dissipative}]
The idea of the proof is that the dynamical intervals formed by the closest returns of $0$ to itself are uniformly expanded for a long time, and thus very small comparing the their size with respect to the invariant measure, which is given by the Lebesgue size of the corresponding dynamical intervals for the rigid rotation, and this fact makes the integral of the logarithm with respect to the invariant measure divergent.

We know that the conclusion of the Theorem \ref{expanding} holds. The first $q_{n+1}-1$ iterates of $[0,x_{q_n}]$ are disjoint from $0$, so $f^{q_{n+1}-1}$ is differentiable on $[0,x_{q_n}]$, and by the Mean Value Theorem there exists $z\in[0,x_{q_n}]$ such that
$$
|f^{q_{n+1}-1}([0,x_{q_n}])|=|f^{q_{n+1}-1}(x_{q_n})-f^{q_{n+1}-1}(0)|=|x_{q_n}|Df^{q_{n+1}-1}(z).
$$
Here we denote by $|I|$ the size of the interval $I$. Let $q_{n+1}-1=NA+B$ for $A,B\geq 0$ and $k<B<k+N$ (we assume that $n$ is large enough). Then
$$
Df^{q_{n+1}-1}(z)=Df^{NA}(z)Df(f^{NA}(z))Df(f^{NA+1}(z))\dots Df(f^{q_{n+1}-2}(z)).
$$
Because $r\leq 1$, there exists $\beta>0$ such that $Df(x)\geq\beta$ for $x\in\s\setminus U$. Because  the first $q_{n+1}-1$ iterates of $[0,x_{q_n}]$ are disjoint from $U$, and the first $q_{n+1}-k-1$ iterates of $[0,x_{q_n}]$ are disjoint from $\cup_{i=0}^kf^{-i}U$, Applying Theorem \ref{expanding} we obtain that
$$
Df^{q_{n+1}-1}(z)\geq\alpha^A\beta^B\geq C\alpha^{\frac{q_{n+1}}N}
$$
for some constant $C$ depending only on $\alpha,\beta,k,N$. Consequently
$$
1\geq |f^{q_{n+1}-1}([0,x_{q_n}])|=|x_{q_n}|Df^{q_{n+1}-1}(z)\geq C\alpha^{\frac{q_{n+1}}N}|x_{q_n}|,
$$
or $|x_{q_n}|\leq\frac 1C\alpha^{-\frac{q_{n+1}}N}$. Then
$$
\int_{[0,x_{q_n}]}\tau d\mu\geq C'\int_{[0,x_{q_n}]}-\log|x|d\mu\geq -C'\mu([0,x_{q_n}])\log|x_{q_n}|\geq C"\log\alpha,
$$
where $C',C"$ are constants independent of $n$, so $\int_{[0,x_{q_n}]}\tau d\mu$ does not converge to zero as $n$ tends to infinity (or $x_{q_n}$ tends to zero), or the integral $\int_{\s}\tau d\mu$ is divergent. This implies that the only ergodic invariant probability measures for $\phi$ and $\psi$ are the Dirac measure at $a$ and $b$, and because $b$ is a repeller we obtain that the physical measure for $\phi$ is $\delta_a$, with the basin of attraction $\T\setminus\{ b\}$.
\end{proof}

\section{The non-dissipative case}

We start with the proof of Theorem \ref{non-dissipative}. This proof was communicated to us by Jiagang Yang.

\begin{proof}[Proof of Theorem \ref{non-dissipative}]
We will show that if there is an open set $U$ such that the flow $\phi$ has positive divergence on $U$, then there can be no physical measure for $\phi$ supported inside $U$. As a consequence $\delta_a$ cannot be the physical measure for $\phi$ (there is positive divergence in a neighborhood), so the third invariant probability measure supported on the quasi-minimal set must exist.

Assume that there exists a physical measure $\alpha$ in $U$. If $x\in\mathcal B(\alpha)$ is in the basin of $\alpha$ we get $\lim_{t\rightarrow\infty}Jac_x\phi_t(x)=\infty$ (there is a lower bound for the divergence, and the orbit of $x$ spends most of the time in $U$). For any $C>0$ let $B(C,t)=\{ x\in\mathcal B(\alpha):\ Jac_x\phi_s(x)>C,\forall s\geq t\}$. Then $Leb(\phi_t(B(C,t)))>CLeb(B(C,t))$ or $Leb(B(C,t))<1/C$. But $B(C,t)$ is increasing with respect to $t$ and covers $\mathcal B(\alpha)$, so $Leb(\mathcal B(\alpha))\leq 1/C$, and this is for every $C>0$, which is a contradiction because $\alpha$ is a physical measure.
\end{proof}

In order to prove Theorem \ref{non-dissipative, bounded type}, we use the following results from \cite{GS}. We remind that $f$ is a monotone self-map of the circle with a flat interval $[c,d]$, $C^{\8}$ and strictly monotone outside this interval, and of the form $f(x)=-(c-x)^r$ on $[c-\epsilon, c]$ and $f(x)=(x-d)^r$ on $[d,d+\epsilon]$.

\begin{thm}\label{circle}
Under the above conditions for $f$, if $r>2$ and the rotation number $\rho$ of $f$ is of bounded type, then
$$
\liminf_{n\rightarrow\infty}\frac{|x_{q_n}|}{|x_{q_{n-2}}|}>0.
$$
\end{thm}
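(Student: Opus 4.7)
The plan is to establish the bound as an a priori real-bound for the dynamical partition of $f$, combining standard distortion control on pieces of orbit that avoid the flat interval $U=[c,d]$ with the explicit power-law form $f(x) = (x-d)^r$ (resp.\ $-(c-x)^r$) near the endpoints of $U$. First I would fix the combinatorial picture: $x_{q_n}$ and $x_{q_{n-2}}$ lie on the same side of $0$ with $x_{q_n}$ strictly between $0$ and $x_{q_{n-2}}$, and since $q_n = q_{n-2}+a_n q_{n-1}$ with $a_n\leq N$ uniformly under the bounded-type hypothesis, it suffices to compare $x_{q_{n-2}}$ with its image under a bounded number of iterates of $f^{q_{n-1}}$ in a neighbourhood of $0$.

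Next I would exploit the fact that the orbit of a closest-return interval visits a small neighbourhood of $U$ at most once per block of length $q_{n-1}$, and that on the remainder of the orbit, which avoids $U$, the map $f$ enjoys cross-ratio / Koebe-type distortion bounds depending only on $r$ and the $C^{\8}$ norm of $f$ off a neighbourhood of $U$. These bounds should reduce the estimate on $Df^{q_{n-1}}$ along $I_{n-2}$ to controlling the single near-critical step, up to a universal distortion factor.

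That near-critical step is where the power-law form of $f$ is used explicitly: if $J$ has an endpoint at $c$ or $d$, then $|f(J)|\asymp |J|^r$, and the branch-point scaling can be computed by hand. Plugging this into the Koebe estimate on the remaining $q_{n-1}-1$ iterates, and iterating at most $a_n\le N$ times, should yield an inequality of the form $|x_{q_n}|\ge C\,|x_{q_{n-2}}|$ with $C>0$ independent of $n$. The condition $r>2$ enters precisely at the cross-ratio step: to carry distortion bounds across a near-critical passage one needs the concave exponent $r-1$ to be strictly larger than $1$, which is exactly $r>2$. Below this threshold the cross-ratio inequality degenerates and no uniform bound of this shape is available.

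The main obstacle, and the reason the result is attributed to \cite{GS} rather than obtained by an elementary Koebe argument, is precisely this compounding of distortion through a piece of orbit coming close to a one-sided critical point with unbounded derivative on the non-flat side. Standard Koebe requires genuine Koebe space around the whole orbit, which is not available on the critical side, so one has to work with the cross-ratio inequalities specifically adapted to flat-critical maps. It is at that technical step that the numerical threshold $r>2$ is forced, and making it quantitative for all closest-return scales is where the bulk of the work lies; the bounded-type hypothesis is used only to keep the number of near-critical passages per renormalization cycle uniformly bounded, so that the distortion estimate composes to the desired liminf bound.
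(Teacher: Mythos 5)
The theorem is not proved in this paper; it is stated and imported verbatim from [GS] (Graczyk, Jonker, \'Swiatek, Tangerman and Veerman, \emph{Comm.\ Math.\ Phys.}\ \textbf{173} (1995)), so there is no internal proof to compare your argument against --- the authors treat it as a cited black box, which is the appropriate move here.

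As a reconstruction of the [GS] argument your sketch is thematically reasonable: bounded type reduces the passage from scale $q_{n-2}$ to scale $q_n$ to a uniformly bounded number of compositions of $f^{q_{n-1}}$; one controls distortion off a neighbourhood of $U$ by cross-ratio (rather than naive Koebe) inequalities; and the power-law form $|f(J)|\asymp|J|^r$ governs the near-critical step. However, the specific explanation you offer for the threshold $r>2$ --- that one needs the ``concave exponent $r-1$ to exceed $1$'' for the cross-ratio estimate to survive a near-critical passage --- is not correct. The Schwarzian derivative of $x\mapsto x^r$ is $-(r^2-1)/(2x^2)$, so negative Schwarzian and the attendant cross-ratio inequalities already hold for every $r>1$; nothing special happens at $r=2$ at the pointwise distortion level. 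The extra margin from $r>1$ to $r>2$ is needed later, in the recursion that [GS] derive for the scaling ratios $a(n)=|x_{q_n}|/|x_{q_{n-2}}|$ by combining the cross-ratio bounds with the near-critical scaling over a full renormalization cycle; it is that recursion, not the distortion lemma itself, that is non-degenerate only for $r>2$ and bounded type. Your sketch neither produces that recursion nor locates the role of $r>2$ inside it, so it remains a plan with a speculative and (I believe) wrong account of the decisive numerical threshold, rather than a proof.
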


\begin{thm}\label{Lebesgue}
Under the above conditions for $f$, if $r\geq 1$, then the wondering set of $\phi$ (which is the basin of attraction of $b$ under $\psi$, without $b$ of course) has full Lebesgue measure on $\T$.
\end{thm}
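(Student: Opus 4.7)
The plan is to work on the transversal circle $\s$ via the return map $f$ and prove the equivalent statement that $K=\s\setminus\bigcup_{n\geq 0}f^{-n}(U)$ has zero Lebesgue measure. This suffices because the wandering set of $\phi$ on $\T$ is the union of the $\psi$-orbits of points in $\bigcup_{n\geq 0}f^{-n}(U)$ together with $\{b\}$; transversality of $\s$ and Fubini along the flow direction (away from the two fixed points, which are null) transfer the full-measure conclusion from $\s$ to $\T$.

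The set $K$ is compact, $f$-invariant, nowhere dense, and coincides with the topological support of the unique invariant probability measure $\mu$, with $f|_K$ semi-conjugate to the rigid rotation $R_\rho$. The crucial technical input is a Koebe-type distortion estimate for $f$: outside $U$ the map is smooth and, thanks to the power form $f(x)=-(c-x)^r$ and $f(x)=(x-d)^r$ with $r\geq 1$, a direct calculation gives $Sf=-(r^2-1)/(2\cdot\text{dist}^2)\leq 0$ on $\s\setminus U$; the resulting cross-ratio inequality supplies a uniform constant $C$ such that for every $k$ and every interval $I\subset\s$ whose first $k$ iterates $f^j(I)$, $0\leq j\leq k$, stay disjoint from the interior of $U$, the distortion of $f^k|_I$ is bounded by $C$.

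With this distortion bound in hand, consider the dynamical partition at level $n$ consisting of the intervals $f^i([0,x_{q_n}])$ for $0\leq i<q_{n+1}$ and $f^j([x_{q_{n-1}},0])$ for $0\leq j<q_n$, which tile the complement in $\s$ of the gaps $\bigcup_{k<n}f^{-k}(U)$ already removed. At the next refinement, each element of this partition receives a new gap arising from a preimage of $U$; since $U$ itself occupies a definite proportion of the root intervals $[0,x_{q_{n+1}}]$ and $[x_{q_n},0]$ (this is precisely where the power-law geometry near $c$ and $d$ with $r\geq 1$ enters), the bounded-distortion pullback under $f^i$ or $f^j$ forces the new gap to have Lebesgue measure at least a fixed fraction of the measure of its host interval. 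Consequently no point of $K$ can be a Lebesgue density point: each $x\in K$ lies in arbitrarily small partition elements, and in each of them a uniform proportion is removed as a gap, so by the Lebesgue density theorem $\text{Leb}(K)=0$.

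The main obstacle is the cross-ratio/Koebe estimate itself, which must remain uniform as iterates of $I$ accumulate on $U$ without entering it. The case $r=1$ is the delicate one: the derivatives at $c$ and $d$ are finite and nonzero but higher-order smoothness degrades across these points, and the Schwarzian calculation returns $Sf=0$ at the critical exponent, so one loses the usual contraction of the cross-ratio and must argue branch by branch; this is precisely the borderline regime that the analysis of \cite{GS} is designed to handle.
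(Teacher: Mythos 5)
The paper itself does not prove this statement: Theorem \ref{Lebesgue} is quoted verbatim as an imported result from \cite{GS}, so there is no internal argument to compare against. Turning to your sketch, the reduction from $\T$ to $\s$ and the identification of $K=\s\setminus\bigcup_{n\geq 0}f^{-n}(U)$ as the set to be shown Lebesgue-null are fine, and your Schwarzian computation near the endpoints of the flat interval is correct.

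The central gap is the jump from that local computation to the assertion ``$Sf\leq 0$ on $\s\setminus U$.'' The hypothesis only fixes the normal form $f(x)=-(c-x)^r$ on $[c-\epsilon,c]$ and $f(x)=(x-d)^r$ on $[d,d+\epsilon]$; on the rest of $\s\setminus U$ the map is an arbitrary $C^{\8}$ diffeomorphism whose Schwarzian has no prescribed sign, so the global Koebe/cross-ratio estimate you invoke does not follow from the stated hypotheses. Making it rigorous requires separating the distortion contribution of the critical endpoints (where the Schwarzian sign helps) from that of the smooth diffeomorphic part (where one must use a Denjoy-type bounded-variation sum, or a cross-ratio inequality that tolerates bounded positive Schwarzian along disjoint iterates), and this is precisely the technical core of \cite{GS}. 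A second difficulty is the geometric step: $U$ is a fixed interval disjoint from the shrinking intervals $[0,x_{q_{n+1}}]$ and $[x_{q_n},0]$, so it does not ``occupy a definite proportion'' of them; what is actually needed is that a suitable preimage $f^{-j}(U)$ covers a uniformly bounded-below fraction of each dynamical partition element, a lemma whose proof again rests on the very distortion control that is in question, and whose validity at the borderline $r=1$ is delicate. You concede that the cross-ratio machinery and the $r=1$ case must be outsourced to \cite{GS}, but since that is where essentially all the work lies, the proposal is a reasonable roadmap rather than a self-contained proof.
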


\begin{proof}[Proof of Theorem \ref{non-dissipative, bounded type}]
The condition $\la_s^2\la_u>1$ is equivalent to $r>2$, so we are in the hypothesis of Theorem \ref{circle}, and we can assume that $\frac{|x_{q_n}|}{|x_{q_{n-2}}|}>\alpha^2$ for $n\geq n_0>0$ and some constant $\alpha\in(0,1)$. From this it is easy to prove by induction that $|x_{q_n}|\geq C\alpha^n$ for some $C>0$. We know from Theorem \ref{non-dissipative} that the flow $\phi$ has an ergodic invariant probability measure $\nu$, corresponding to the invariant measure $\mu$ for $f$, supported on the quasi-minimal set. It remains to show that $\nu$ is indeed the physical measure for $\phi$, and its basin of attraction has full Lebesgue measure.

Because of Theorem \ref{Lebesgue}, it is enough to check that the points from the wondering set of $\phi$ are in the basin of attraction of $\nu$, or just the points from the interval $[c,d]$, because every point from the wondering set passes through this interval. Fix some $z\in[c,d]$ and let $z_n=g^{n-1}(z)$ and $t_n=\tau(z_n)$. For any $t>0$ there exists $N\in\N$ such that $t=t_1+t_2+\dots +t_N+\tilde t$ where $0<\tilde t\leq t_{N+1}$. There exists $n\in\N$ such that $q_n\leq N<q_{n+1}$.

Let $m_t$ me the probability measure corresponding to the piece of trajectory of length $t$ of
$\phi$ starting at z. The possible limits of $m_t$ must be of the form $r\delta_a+(1-r)\mu$, for
some $r\in[0,1]$, because the only ergodic invariant probability measures are $\delta_a,\delta_b$
and $\mu$, and the forward trajectory of $z$ is bounded away from $b$ because $b$ is a repeller.
We will show that $r$ must be zero.

Because $\tau$ is uniformly bounded from bellow, we get that $t\geq CN$. We will fix $0<n_0<n$ and
estimate the time $t_{A_{n_0}}$ spent by the trajectory $\phi_s(z)$, $0\leq s\leq t$ inside the
neighborhood of $a$ given by $A_{n_0}=\{ \phi_s(w):\ w\in[x_{q_{n_0}}, x_{q_{n_0+1}}], 0\leq
s\leq\tau(w)\}$. If, by choosing a convenient $n_0$, this time spent inside $A_{n_0}$ can be made
arbitrarily small with respect to $t$, for any $t$ large enough, then we get that $r$ must be
arbitrarily small and we are done.

We first remark that $h(z_n)=y_{-n}$, where $h$ is the semi-conjugacy between $f$ and the rigid
rotation $R_{\rho}$. Then $z_{q_l}\in[x_{q_{l-1}},x_{q_{l+1}}]$ and
$x_{q_l}\in[z_{q_{l-1}},z_{q_{l+1}}]$, for any $l\in\N$. Also from now on we will use the same
notation $C$ for different constants which are independent of $t$ (or $N$ or $n$) and $n_0$.

The number of points $z_i$, $1\leq i\leq N$ inside $[x_{q_l},x_{q_{l+2}}]$ is equal to the number of points $y_{-i}$, $1\leq i\leq N$ inside $[y_{q_{l}},y_{q_{l+2}}]$. Because the rotation number $\rho$ of $f$ is of bounded type, the points $y_{-i}$, $1\leq i\leq N$ divide the circle into $N$ intervals which are of comparable size, bounded from bellow by $1/CN$ and from above by $C/N$, for some constant $C$ independent of $N$ (depends on $\rho$). This implies that the number of points $y_{-i}$, $1\leq i\leq N$ which are inside $[y_{q_{l}},y_{q_{l+2}}]$ is less than or equal to $CN|y_{q_l}|$. As we remarked before $|y_{q_l}|$ is of order $1/q_{l+1}$, so the number of points $z_i$, $1\leq i\leq N$ which are inside $[x_{q_l},x_{q_{l+2}}]$ is smaller than or equal to $CN/q_{l+1}$.

Then the time $t_{B_l}$ spent by the trajectory $\phi_s(z)$, $0\leq s\leq t$ inside $B_l=\{ \phi_s(w):\ w\in[x_{g_l}, x_{q_{l+2}}], 0\leq s\leq\tau(w)\}$ is bounded from above by $-CN\log|x_{q_{l+2}}|/q_{l+1}$. We remind that $|x_{q_l}|\geq C\alpha^l$, and one can also show by induction that $q_l\geq\beta^l/C$ for $\beta=\frac{1+\sqrt 5}2$, so we get $t_{B_l}\leq CN\frac l{\beta^l}$. Then
$$
\frac {t_{A_{n_0}}}t=\frac 1t\sum_{l=n_0}^{n-1}t_{B_l}\leq \frac{CN}t\sum_{l=n_0}^{n-1}\frac l{\beta^l}\leq C\sum_{l=n_0}^{n-1}\frac l{\beta^l}.
$$
But $\sum_{l=1}^{\8}\frac l{\beta^l}$ is convergent, so by taking $n_0$ large enough we can make $t_{A_{n_0}}/t$ as small as we want for any $n$ arbitrarily large, so $r=0$.
\end{proof}

\section{A non-dissipative example}

In this section we will construct the example from Theorem \ref{non-dissipative, Liouville}. The flow we need will be a limit of flows on the torus with a source and a saddle and a parabolic orbit of (quickly) increasing period. The sequence of flows is constructed by induction. The idea of the construction is to make the parabolic periodic orbit of the following flow spend most of the time very close to the parabolic periodic orbit of the previous flow, and using this fact to make sure that the forward orbit of the points from $[c,d]$ spend most of the time away from the saddle $a$; this would show that there must be an invariant probability measure supported on the quasi-minimal set, and because of Theorem \ref{Lebesgue}, this measure must be the physical one.

There exist a $C^{\8}$ (even analytic) flow $\phi_1$ on the two-torus which has a saddle $a$ and a source $b$, a transversal $\s$, the return map to the transversal $g_1$ with a discontinuity at $0$, its inverse $f_1$ with a flat interval $[c,d]$, and $g_1$ has exactly one parabolic non-degenerate fixed point $p_1$, no other fixed points, and $0\notin [c,d]$. The sequence of flows we will construct will be small perturbations of $\phi_1$, with the perturbation supported near the orbit of $p_1$, so that $a,b,c,d,0$ remain unchanged (we prefer to leave them unchanged in order to simplify notations, and this is why we work with $C^{\8}$ flows).

We start with some notations. Given a flow $\phi$, a map $f$ and a set $A$, we denote by
$$
t_A(\phi, x,t)=\int_0^t\chi_A(\phi_s(x))ds=Leb(\{s\in[0,t]:\ \phi_s(x)\in A\}),
$$
$$
n_A(f,x,n)=|\{i\in\N:\ 0\leq i\leq n, f^i(x)\in A\}|,
$$
i.e. the time spent inside $A$ by the trajectory of $\phi$ (or $f$) starting at $x$ and of length $t$ (or $n$); here $\chi_A$ is the indicator function of the set $A$, and $|A|$ is the cardinality of $A$. We fix a sequence $\{ c_n\}_{n\geq 1}$ in $(0,1)$ such that $\prod_{n=1}^{\8}c_n>0$.

Assume that we have constructed the $C^{\8}$ flows $\phi_k$, $1\leq k\leq n$ on the torus $\T$, with the following properties:
\begin{enumerate}
\item
If $X_k$ is the vector field corresponding to $\phi_k$, then $\| X_k-X_{k-1}\|_{C^{k}}<2^{-k}$ (this guarantees the $C^{\8}$ convergence of $X_k$ to a $C^{\8}$ vector field $X$);
\item
$\phi_k$ has a saddle $a$, a source $b$, $\s$ is transverse to the flow, with the return map $g_k$ which has a discontinuity at $0$, with the inverse $f_k$ which has a flat interval $[c,d]$; the return time to $\s$ is $\tau_k$;
\item
$\phi_k$ also has a non-degenerate parabolic orbit, of period $\tau_0^k$; let $p_k$ be the point of the parabolic orbit in $\s$ which is closest to $0$; $p_k$ is a non-degenerate parabolic periodic point for $g_k$ of period $b_k=b_{k-1}a_{k-1}$, where $a_{k-1}$ can be chosen arbitrarily large; there are no other periodic orbits or homoclinic or heteroclinic connections for $\phi$, (so $0$ does not belong to the forward orbit of $[c,d]$ under $g_k$);
\item
Time estimates: For every $1\leq i<n$ there exist open neighborhoods $B_i$ of $p_i$ such that
$dist(0,A_i)>0$ where
$$
A_i=\{ \phi_i(x):\ x\in B_i, 0\leq t\leq\tau_i(x)+\tau_i(g_i(x))+\dots+\tau_i(g_i^{b_i-1}(x))\}
$$
(the image of $B_i$ under the flow $\phi_i$ until it returns to $\s$ for $b_i$ times; it is a
neighborhood of the parabolic orbit of $\phi_i$); there exist $0<t_1<t_2<\dots <t_{n-1}$ such that
for any $z\in[c,d]$, for every $1\leq i<k<n$, and for every $t\geq t_i$, we have
$$
t_{A_i}(\phi_k,z,t)> c_ic_{i+1}\dots c_{k-1}t,
$$
$$
t_{A_i}(\phi_k,p_k,\tau_0^k)>c_ic_{i+1}\dots c_{k-1}\tau_0^k;
$$
furthermore $t_k$ depends only on $\phi_1,\phi_2,\dots,\phi_{k-1}$.
\end{enumerate}

Now we will explain how to construct $\phi_{n+1},p_{n+1},b_{n+1},B_n$ and $t_n$, such that the properties enumerated above hold also for $\phi_{n+1}$.

First consider a $C^{\8}$ one-parameter family of flows which unfold generically the parabolic
orbit of $\phi_n$: $\phi_n^s$, $-\epsilon<s<\epsilon$, $\phi_n^0=\phi_n$, and for $s>0$, $g_{n,s}$
(the return map of $\phi_n^s$ to $\s$) has no periodic orbit of period $b_n$. This family can be
constructed by small rotations of the vector field inside a small disk intersecting the parabolic
orbit, thus away from $a,b,c,d,0$. We can assume also that $g_{n,s}$ and $g_{n,s}^{b_n}$ are
monotone with respect to $s$, so the rotation number of $g_{n,s}^{b_n}$ is monotone and continuous
as a function of $s$. Because the parabolic orbit disappears, the rotation number is non-constant,
so there are parameters for which the rotation number of $g_{n,s}^{b_n}$ is equal to $1/a_n$ or
$-1/a_n$ for every integer $a_n$ large enough (we will assume that it is $1/a_n$, for $-1/a_n$ the
proof is similar). In fact the set of parameters for which the rotation number of $g_{n,s}^{b_n}$
is $a_n$ is an interval, and by taking $s$ to be an endpoint of this interval we know that the
periodic orbits of $g_{n,s}^{b_n}$ of period $a_n$ must be parabolic or indifferent. Here we use
the positive divergence at the saddle condition, in the case of non-positive divergence we may also have homoclinic connections
($0=g_{n,s}^{b_n-1}(c)$ for example); however in the positive divergence case the homoclinic
connections are repelling and thus the rotation number would be locally constant, so they cannot
correspond to the endpoints of intervals. A further arbitrarily small perturbation ensures that
there is only one parabolic orbit for $g_{n,s}^{b_n}$ which is non-degenerate (rotate the vector
field up outside of a fixed parabolic orbit for $\phi_n^s$, inside a neighborhood of the parabolic
orbit of $\phi_1$ for example).

Thus for every $a_n$ large enough we have a new flow $\phi_{a_n}$ which verifies the conditions
(1)-(3). Furthermore the larger $a_n$ is, the closer $\phi_{a_n}$ is to $\phi_n$, and the
parabolic orbit of $\phi_{a_n}$ spends more and more time inside any given neighborhood of the
parabolic orbit of $\phi_n$. Let $p^{a_n}$ be the point of the parabolic orbit of $\phi_{a_n}$
which is on $\s$ and closest to $0$, $b_na_n$ the period of $p^{a_n}$ under $g_{a_n}$
(the return map to $\s$), and the return time is $\tau_{a_n}$.

For $x\in\s$ let $\tau_n^k(x)=\sum_{i=0}^{k-1}\tau_n(g_n^i(x))$ ($\tau_n$ is the return time to $\s$ for $\phi_n$; $\tau_n^k$ is the time needed for $\phi_n$ to return to $\s$ $k$ times). We know that $\tau_0^n=\tau_n^{b_n}(p_n)$, so from (4) we have
$$
t_{A_i}(\phi_n,p_n,\tau_n^{b_n}(p_n))>c_ic_{i+1}\dots c_{n-1}\tau_n^{b_n}(p_n).
$$
By continuity, there exists a neighborhood $B_n$ of $p_n$ in $\s$ such that
$$
t_{A_i}(\phi_n,x,\tau_n^{b_n}(x))>c_ic_{i+1}\dots c_{n-1}\tau_n^{b_n}(x),\ \forall 1\leq i<n,\
\forall x\in \overline {B}_n.
$$
Define as before
$$
A_n=\{ \phi_n(x):\ x\in B_n, 0\leq t\leq\tau_n^{b_n}(x)\}.
$$

We will need to analyze where the forward orbit of $[c,d]$ spends most of its time. The following Lemma is the main tool we use, it says that the forward orbits of $[c,d]$ and $p^{a_n}$ are bounded away from $0$ independent from $a_n$, and it uses again the divergence condition on the saddle.

\begin{lem}\label{parabolic}
There exists $A,\delta>0$ such that for every $a_n>A$, for every $z\in[c,d]\cup\{ p^{a_n}\}$, and
for every $k>0$, we have $|g_{a_n}^k(z)|>\delta$.
\end{lem}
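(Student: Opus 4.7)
My plan is to reduce the uniform bound to the unperturbed picture and then use a trapping-neighborhood argument to pass to $g_{a_n}$. I would first treat $g_n$ itself: the only nontrivial recurrent set of $g_n$ on $\s$ is the parabolic periodic orbit $\mathcal{P}_n=\{g_n^i(p_n):0\leq i<b_n\}$, which by construction (condition (3), whose proof used positive divergence to exclude homoclinic loops) is disjoint from $0$, and $0$ does not lie in the forward orbit of $[c,d]$ under $g_n$. Standard arguments for monotone degree-one circle maps with a single parabolic orbit then show that the forward orbit of $[c,d]$ enters the attracting basin of $\mathcal{P}_n$, so there is $\delta_1>0$ with $|g_n^k(z)|>2\delta_1$ for all $z\in[c,d]$ and all $k\geq 0$.

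Next I would build a forward-invariant trapping set $T\subset\s$ for $g_n$ out of the one-sided attracting basins of the parabolic fixed points of $g_n^{b_n}$, slightly enlarged on the repelling side so that $\overline{g_n(T)}\subset T$ strictly; shrinking if necessary, $\overline{T}$ stays disjoint from $(-\delta_1,\delta_1)$. Pick $K$ with $g_n^K([c,d])\subset T$. The transfer step exploits that $g_{a_n}\to g_n$ uniformly in $C^0$ (the perturbation is supported near the orbit of $p_n$ and is small for large $a_n$): first, the iterates $g_{a_n}^j([c,d])$ for $j\leq K$ stay close to their $g_n$-counterparts and hence at distance $>\delta_1$ from $0$, with $g_{a_n}^K([c,d])\subset T$; second, the \emph{strict} inclusion $\overline{g_n(T)}\subset T$ persists as $\overline{g_{a_n}(T)}\subset T$ for $a_n$ large. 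Therefore the forward $g_{a_n}$-orbit of $[c,d]$ lies in $T\cup\bigcup_{j<K}g_{a_n}^j([c,d])$, at distance $>\delta_1$ from $0$. For the parabolic point, $p^{a_n}\to p_n\in T$ as $a_n\to\infty$, so the whole parabolic orbit of $g_{a_n}$ lies in $T$ as well, and $\delta=\delta_1$ works.

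The main obstacle will be the construction of $T$ with the strict invariance $\overline{g_n(T)}\subset T$: the parabolic fixed points of $g_n^{b_n}$ sit on the boundary of their one-sided attracting basins, so a naive choice gives only non-strict invariance, which does not survive perturbation. Adding a small buffer on the repelling side and using that orbits there loop around the circle in a uniformly bounded number of $g_n^{b_n}$-iterates should resolve this. The positive-divergence hypothesis enters only indirectly, in the reduction to $g_n$ (to rule out homoclinic connections for the model map), exactly as in the construction of the sequence $\phi_k$.
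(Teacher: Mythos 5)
There is a genuine gap, and it is exactly where you flagged the ``main obstacle.'' A strictly trapping neighborhood of the parabolic orbit of $g_n$ does not exist: take $G=g_n^{b_n}$ and a parabolic fixed point $q$, say with $G(x)>x$ for $x\ne q$ near $q$. Any set of the form $(q-\epsilon,q+\delta]$ has $G(q+\delta)>q+\delta$, so the image protrudes on the repelling side; enlarging the buffer merely moves the protrusion. The iterates that escape on the repelling side of $q$ do not immediately re-enter another block $T_{q'}$: they traverse the entire gap between consecutive points of $\mathcal P_n$, and since these gaps cover $\s\setminus\mathcal P_n$, one of them contains $0$. Consequently every forward-invariant set of $g_n$ that contains a two-sided neighborhood of $\mathcal P_n$ must meet every neighborhood of $0$, so no $T$ with $\overline{g_n(T)}\subset T$ and $\overline T\cap(-\delta_1,\delta_1)=\emptyset$ can exist. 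The one-sided set that \emph{is} forward invariant (the attracting half-basins) has $\mathcal P_n$ on its boundary, so the invariance is not strict and does not persist when $\mathcal P_n$ bifurcates away in $g_{a_n}$.

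The deeper problem is that for $g_{a_n}$ with rotation number $1/a_n$ for $g_{a_n}^{b_n}$, the forward orbit of $[c,d]$ \emph{does} wind once around the circle every $a_nb_n$ steps and therefore does pass through the transit region containing $0$; the lemma is asking how close that passage comes, which is a quantitative question that a $C^0$-perturbation/soft-topology argument cannot settle uniformly in $a_n$. This is also why your claim that positive divergence enters only ``indirectly'' is a warning sign: the paper's proof uses $r>1$ essentially and quantitatively. It shows $|p^{a_n}|>\delta_0$ by observing that $(g_{a_n}^{a_nb_n})'(p^{a_n})=1$ forces $g_{a_n}'(p^{a_n})$ to be bounded above (via a uniform bi-Lipschitz estimate for the long return $g_{a_n}^{a_nb_n-1}$, proved by a parabolic-distortion lemma, Lemma~\ref{intervals}), while near $0$ one has $g_{a_n}'(t)\sim \frac1r|t|^{1/r-1}\to\infty$ because $r>1$. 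For the endpoints $c,d$ it again plays the expansion $g_{a_n}(-\delta,0)=(c-\delta^{1/r},c)$ against the bi-Lipschitz bound on $G=g_{a_n}^{a_nb_n-1}$ to produce a forbidden extra periodic orbit if $G(c)$ or $G(d)$ were too close to $0$. None of this is captured by the trapping-set reduction, and I do not see how to repair your argument without importing essentially the same quantitative ingredients.
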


We postpone the proof of Lemma \ref{parabolic} for later. As a corollary of this, we get that
there exists $T_n>0$ such that for every $a_n>A$, for every $z\in[c,d]\cup\{ p^{a_n}\}$, and for
every $k>0$, we have $\tau_{a_n}(g_{a_n}^k(z))<T_n$. Let also $T_0$ be a lower bound for the return time for all $a_n$ large enough.

 Let $B_n'\subset B_n$ be a neighborhood of $p_n$ such that for every $a_n>A$ (eventually make $A$ larger), we have $C_n=\cup_{i=0}^{b_n-1}g_n^i(B_n')\subset\cup_{i=0}^{b_n-1}g_{a_n}^i(B_n)$. The set of points $\{ g_{a_n}^{kb_n}(p^{a_n}):\ 0\leq k\leq a_n-1\}$ divides the circle $\s$ into $a_n$ disjoint intervals, which are permuted by $g_{a_n}^{b_n}$. Because $g_{a_n}^{b_n}$ is close to $g_n^{b_n}$, and $p_n$ is the unique fixed parabolic orbit for $g_n^{b_n}$, for all $a_n$ large enough there are only finitely many intervals from the above mentioned family, which are not inside any given neighborhood of the parabolic orbit of $g_n$. Let $d_n$ be the number of intervals from the above partition which are not inside $C_n$, which is independent of $a_n$. This implies that for every $a_n>A$ (again eventually after making $A$ larger), we have
$$
n_{\s\setminus C_n}(g_{a_n},p^{a_n},b_na_n)\leq d_nb_n,
$$
$$
n_{\s\setminus C_n}(g_{a_n},x,N)\leq kd_nb_n,\ \forall x\in\s,\forall (k-1)a_nb_n<N\leq ka_nb_n.
$$
The last inequality implies that
$$
n_{\s\setminus C_n}(g_{a_n},x,N)\leq\frac {d_n}{a_n}N+d_nb_n,\ \forall x\in\s,\forall N\in\N.
$$

We fix $t_n$ large enough (its size depends on $b_n,d_n,c_n,T_n$ and $T_0$ and will be specified later). Let $a_n>A$ be large enough (again it will be specified later, and depends on $t_n$) such that $\phi_{n+1}=\phi_{a_n}$ is close enough to $\phi_n$ in order to
satisfy the following conditions:
$$
(a)\ t_{A_i}(\phi_{n+1},z,t)>c_ic_{i+1}\dots c_{n-1}t,\ \forall z\in[c,d],\ \forall 1\leq i<n,\ \forall t\in[t_i,t_n],
$$
$$
(b)\ t_{A_i}(\phi_{n+1},x,\tau_{n+1}^{b_n}(x))>c_ic_{i+1}\dots c_{n-1}\tau_{n+1}^{b_n}(x),\ \forall x\in \overline{B}_n,\ \forall 1\leq i<n.
$$
Here $\tau_{n+1}$ is the return map of $\phi_{n+1}$ to $\s$, and we also use the expected notations
$g_{n+1}=g_{a_n},p_{n+1}=p^{a_n},b_{n+1}=b_na_n$. For any $x\in\s$ and $N\in\N$, denote by
$$
B(x,N)=\{ i\in\N:\ 0\leq i\leq N-b_n, g_{n+1}^i(x)\in B_n'\}
$$
and
$$
D(x,N)=\{ i\in\N:\ 0\leq i\leq N-b_n, g_{n+1}^i(x)\notin C_n\}\cup\{ N-b_n+1,N-b_n+2,\dots,N\}.
$$
We remark that
$$
|D(x,N)|\leq n_{\s\setminus C_n}(g_{n+1},x,N)+b_n\leq\frac {d_n}{a_n}N+d_nb_n+b_n.
$$

In order to finish the construction by induction we have to show the time estimates (4) from $\phi_{n+1},A_n$ and $t_n$.
\begin{eqnarray*}
t_{A_n}(\phi_{n+1},p_{n+1},\tau_0^{n+1})&\geq&\sum_{y\in B(p_{n+1},b_{n+1})}t_{A_n}(\phi_{n+1},y,\tau_{n+1}^{b_n}(y)))\\
&=&\sum_{y\in B(p_{n+1},b_{n+1})}\tau_{n+1}^{b_n}(y)\geq\tau_0^{n+1}-\sum_{y\in D(p_{n+1},b_{n+1})}\tau_{n+1}(y)\\
& > & \tau_0^{n+1}-T_n|D(p_{n+1},b_{n+1})|\\
&>&\tau_0^{n+1}\left( 1-\frac{d_nT_n}{a_nT_0}\right)-b_nT_n(1+d_n).
\end{eqnarray*}
Here we used the bound from above on $|D(x,N)|$ and the fact that $b_{n+1}T_0\leq\tau_0^{n+1}$.

For every $1\leq i\leq n-1$ we have, using the condition ($b$),
\begin{eqnarray*}
t_{A_i}(\phi_{n+1},p_{n+1},\tau_0^{n+1}) & \geq & \sum_{y\in B(p_{n+1},b_{n+1})}t_{A_i}(\phi_{n+1},y,\tau_{n+1}^{b_n}(y)))\\
& > & c_ic_{i+1}\dots c_{n-1}\sum_{y\in B(p_{n+1},b_{n+1})}\tau_{n+1}^{b_n}(y)\\
& \geq & c_ic_{i+1}\dots c_{n-1}(\tau_0^{n+1}-\sum_{y\in D(p_{n+1},b_{n+1})}\tau_{n+1}(y))\\
& \geq & c_ic_{i+1}\dots c_{n-1}(\tau_0^{n+1}-T_n|D(p_{n+1},b_{n+1})|)\\
&>&c_ic_{i+1}\dots c_{n-1}\left[ \tau_0^{n+1}\left( 1-\frac{d_nT_n}{a_nT_0}\right)-b_nT_n(1+d_n)\right].
\end{eqnarray*}

So in order to get the time estimate required in (4) for the orbit of $p_{n+1}$ and $1\leq i\leq n$ we need
$$
c_n\tau_0^{n+1}<\tau_0^{n+1}\left( 1-\frac{d_nT_n}{a_nT_0}\right)-b_nT_n(1+d_n),
$$
which is definitely true if $a_n$ is large enough (this would also imply that $\tau_0^{n+1}\geq a_nb_nT_0$ is large).

In order to get the required time estimates for the orbit of $[c,d]$, we first remark that the condition ($a$) gives the inequalities for $1\leq i<n$ and $t_i\leq t\leq t_n$, so it remains to show the inequalities for $t>t_n$. For every $1\leq i<n$, $z\in[c,d]$ and $t>0$, $\tau_{n+1}^N(z)\leq t<\tau_{n+1}^{N+1}(z)$, applying again condition ($b$) we obtain
\begin{eqnarray*}
t_{A_i}(\phi_{n+1},z,t) & \geq & \sum_{y\in B(z,N)}t_{A_i}(\phi_{n+1},y,\tau_{n+1}^{b_n}(y)))\\
& > & c_ic_{i+1}\dots c_{n-1}\sum_{y\in B(z,N)}\tau_{n+1}^{b_n}(y)\\
& \geq & c_ic_{i+1}\dots c_{n-1}(t-\sum_{y\in D(z,N)}\tau_{n+1}(y))\\
& \geq & c_ic_{i+1}\dots c_{n-1}(t-T_n|D(z,N)|)\\
&>& c_ic_{i+1}\dots c_{n-1}\left[ t\left( 1-\frac{d_nT_n}{a_nT_0}\right)-b_nT_n(1+d_n)\right].
\end{eqnarray*}
We used again the bound on $|D(z,N)|$ from above and the fact that $N\leq t/T_0$. In a similar way we get
$$
t_{A_n}(\phi_{n+1},p_{n+1},t)>t\left( 1-\frac{d_nT_n}{a_nT_0}\right)-b_nT_n(1+d_n).
$$

So in order to get the time estimate required in (4) for the orbit of $z\in[c,d]$ we need
$$
c_nt<t\left( 1-\frac{d_nT_n}{a_nT_0}\right)-b_nT_n(1+d_n),\ \forall t>t_n.
$$
To finish the proof of the time estimates and thus the induction, we do the following: first fix $A_0=\frac{2d_nT_n}{a_nT_0(1-c_n)}$, so if $a_n>A_0$ then $1-c_n-\frac{d_nT_n}{a_nT_0}>\frac{1-c_n}2$; then let $t_n=\frac{2b_nT_n(1+d_n)}{1-c_n}$, so if $a_n>A_0$ and $t>t_n$ then $c_nt<t\left( 1-\frac{d_nT_n}{a_nT_0}\right)-b_nT_n(1+d_n)$; then choose $a_n>A,a_n>A_0$ such that $\tau_0^{n+1}\geq a_nb_nT_0>t_n$ and the conditions ($a$) and ($b$) are satisfied, and we are done.

We have constructed the sequence $\phi_n$, and let $\phi_0$ be the limit, which is $C^{\8}$ because of condition (1). By making $a_n$ grow fast enough we can make sure that the rotation number of $g_0$, the return map of $\phi_0$ to $\s$, is irrational, or even Liouville, so $\phi_0$ is indeed a Cherry flow. Given any $\epsilon>0$, there exists $n_0>0$ such that $\prod_{n=n_0}^{\8}c_n>1-\epsilon$. Let $3\delta=dist(a,A_{n_0})>0$ ($a$ is the saddle). Given any $t>t_{n_0}$ and any $z\in(c,d)$, there exists $n_z>n_0>0$ such that for any $s\in[0,t]$, $dist(\phi_{0,t}(x),\phi_{n_z,t}(x))<\delta$. Then
$$
t_{B(a,2\delta)}(\phi_0,z,t)<t-c_{n_0}c_{n_0+1}\dots c_{n_z}t<\epsilon t,
$$
so for any probability measure $\alpha$ which is a limit of probability measures $m_t(\phi_0,z)$ (corresponding to orbits of $\phi_0$ starting at $z$ and of length $t$), we have $\alpha(B(a,\delta))<\epsilon$, ($B(a,\delta)$ is the ball centered at $a$ and of radius $\delta$). In a similar way to the proof of Theorem \ref{non-dissipative, bounded type}, this shows that the probability invariant measure $\nu$ supported on the quasi-minimal set is the physical measure for $\phi_0$.

To complete the construction, we now give the proof of Lemma \ref{parabolic}.

\begin{proof}[Proof of Lemma \ref{parabolic}]
It is enough to prove that $|p^{a_n}|>\delta$, $|g_{a_n}^{a_nb_n-1}(c)|>\delta$, and $|g_{a_n}^{a_nb_n-1}(d)|>\delta$ for some $\delta>0$, because these are the points from the forward trajectories of $p^{a_n}$ and $[c,d]$ closest to $0$ (remember that the orbit of $p^{a_n}$ is the unique parabolic orbit of $g_{a_n}$, so the forward orbit of $[c,d]$ will converge to it). We will need the following Lemma.

\begin{lem}\label{intervals}
Let $F:[a_1,a_2]\rightarrow \R$, $p\in[a_1,a_2]$, such that $F(p)=p$, $F'>0$ on $[a_1,a_2]\setminus\{ p\}$, and $F''\neq 0$ on $[a_1,a_2]$ (p is a non-degenerate parabolic fixed point). Then there exists $K>0$ such that for any $f$ which is increasing, $C^2$ close enough to $F$ ($f''\neq 0$), $f(x)\neq x\ \forall x\in[a_1,a_2]$, and for any $x\in[a_1,F^2(a_1)]$, if we denote by $n(x,f)$ the number of iterates of $x$ under $f$ which stay inside $[a_1,a_2]$ ($f^{n(x,f)-1}(x)\leq a_2<f^{n(x,f)}(x)$), then we have $1/K\leq (f^{n(x,f)})'(x)\leq K$.
\end{lem}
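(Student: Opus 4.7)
My plan is to use the classical telescoping argument for passage through a non-degenerate parabolic fixed point. After possibly reversing orientation we may assume $F''>0$ on $[a_1,a_2]$, so $G(x):=F(x)-x$ has a non-degenerate minimum at $p$ with $G(p)=0$ and is strictly positive elsewhere. By $C^2$-closeness and the constant-sign hypothesis $f(x)\neq x$, the function $\phi(x):=f(x)-x$ is also of constant sign, and the exit condition $f^{n-1}(x_0)\leq a_2<f^n(x_0)$ forces $\phi>0$; hence $x_i:=f^i(x_0)$ is strictly increasing. Note that $[a_1,F^2(a_1)]\subset[a_1,p)$ lies at a definite distance from $p$ (depending only on $F$), while $(a_2,f(a_2)]$ lies above $p$; both are contained in compact subsets of $[a_1,a_2]$ bounded away from $p$.

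The key identity comes from the mean value theorem applied to $f$ on $[x_i,x_{i+1}]$, using $x_{i+1}-x_i=\phi(x_i)$:
\[
\phi(x_{i+1}) \;=\; \phi(x_i)+\bigl(f'(\eta_i)-1\bigr)(x_{i+1}-x_i) \;=\; f'(\eta_i)\,\phi(x_i),\qquad \eta_i\in(x_i,x_{i+1}).
\]
Telescoping yields $\prod_{i=0}^{n-1}f'(\eta_i) = \phi(x_n)/\phi(x_0)$. To pass from this product to $(f^n)'(x_0)=\prod_{i=0}^{n-1}f'(x_i)$ I would invoke a bounded-distortion estimate: since $f$ is $C^2$-close to $F$, $\log f'$ is uniformly Lipschitz on $[a_1,f(a_2)]$ with some constant $L$ independent of $f$, so
\[
\Bigl|\log(f^n)'(x_0)-\log\frac{\phi(x_n)}{\phi(x_0)}\Bigr| \;\leq\; \sum_{i=0}^{n-1} L\,|x_{i+1}-x_i| \;=\; L(x_n-x_0) \;\leq\; L\bigl(f(a_2)-a_1\bigr),
\]
where monotonicity of $x_0<x_1<\cdots<x_n$ collapses the sum to a telescoping difference. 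Combined with the fact that $\phi(x_0)$ and $\phi(x_n)$ are both bounded above and below by positive constants depending only on $F$ (since $x_0$ and $x_n$ lie in compact sets bounded away from $p$, on which $\phi$ is close to the strictly positive function $G$), this delivers the required $1/K\leq (f^n)'(x_0)\leq K$.

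The main point to get right is the lower bound on $\phi(x_0)$: if $x_0$ were allowed arbitrarily close to $p$, then $\phi(x_0)$ could be arbitrarily small and the ratio $\phi(x_n)/\phi(x_0)$ would blow up. The hypothesis $x_0\in[a_1,F^2(a_1)]$ is precisely what rules this out, since $F^2(a_1)$ is a fixed point of $[a_1,p)$ at a positive distance from $p$ determined by $F$ alone. Everything else is routine $C^2$ distortion bookkeeping.
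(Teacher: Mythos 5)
Your argument is correct in outline but takes a genuinely different route from the paper's. The paper exploits convexity directly: when $F'',f''>0$, every iterate $f^{n-1}$ is increasing and convex, so $(f^{n-1})'$ is increasing on $[x,f(x)]$; the Mean Value Theorem applied once to $f^{n-1}$ on that interval produces a $z\in[x,f(x)]$ with $(f^{n-1})'(z)=\dfrac{x_n-x_{n-1}}{x_1-x_0}$, and then
\[
(f^{n})'(x)=f'(x)\,(f^{n-1})'(f(x))\;\geq\; f'(x)\,(f^{n-1})'(z)
\]
reduces the whole estimate to the three endpoint quantities $f'(x_0)$, $x_n-x_{n-1}$ and $x_1-x_0$, each uniformly controlled since $x_0$ and $x_{n-1}$ sit at a definite distance from $p$ (the upper bound on $(f^n)'$ is symmetric). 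Your proof replaces this convexity trick with the standard telescoping identity $\phi(x_{i+1})=f'(\eta_i)\phi(x_i)$ for $\phi=f-\mathrm{id}$, followed by a bounded-distortion comparison of $\prod_i f'(x_i)$ with $\prod_i f'(\eta_i)$ via the Lipschitz constant of $\log f'$ and the telescoping sum $\sum_i|x_{i+1}-x_i|=x_n-x_0$. Both deliver the same uniform bound; the paper's version is shorter and only ever evaluates $f$ and $f'$ inside $[a_1,a_2]$, while yours is the one that survives when convexity of the iterates is not available.

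There is one slip you should fix. You assert that $(a_2,f(a_2)]$ is ``contained in a compact subset of $[a_1,a_2]$,'' but it lies entirely outside $[a_1,a_2]$, and your formula uses $\phi(x_n)=f(x_n)-x_n$ together with $f'(\eta_{n-1})$, $\eta_{n-1}\in(x_{n-1},x_n)$, both of which require $f$ to be defined past $a_2$ — something the hypothesis does not provide. The remedy is easy: stop the telescope at $\prod_{i=0}^{n-2}f'(\eta_i)=\phi(x_{n-1})/\phi(x_0)$ so that every $\eta_i$ with $i\leq n-2$ lies in $[a_1,a_2]$, run the distortion estimate against $\prod_{i=0}^{n-2}f'(x_i)$ with the bound $L(x_{n-1}-x_0)\leq L(a_2-a_1)$, and absorb the remaining factor $f'(x_{n-1})$ separately; since $x_{n-1}\in[f^{-1}(a_2),a_2]$ is bounded away from $p$, both $f'(x_{n-1})$ and $\phi(x_{n-1})$ are uniformly controlled, and the conclusion follows as you intended. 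You should also say explicitly that the hypothesis forces $F^2(a_1)<p<F^{-2}(a_2)$ — this is what makes $\phi(x_0)$ and $\phi(x_{n-1})$ bounded below, and it is implicit in the paper's proof as well.
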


\begin{proof}
This fact would probably follow from parabolic renormalization results; however there is a simple proof which we will present here.

We assume that $F'',f''>0$, the other case is similar. Then every iterate of $f$ is also increasing and convex. There exists $z\in[x,f(x)]$ such that
$$
(f^{n(x,f)-1})'(z)=\frac{f^{n(x,f)}(x)-f^{n(x,f)-1}(x)}{f(x)-x}.
$$
Then
\begin{eqnarray*}
(f^{n(x,f)})'(x) & = & f'(x)(f^{n(x,f)-1})'(f(x))\geq f'(x)(f^{n(x,f)-1})'(z)\\
& \geq & f'(x)\frac{f^{n(x,f)}(x)-f^{n(x,f)-1}(x)}{f(x)-x}\\
& \geq & \frac 12\inf_{y\in[a_1,F^2(a_1)]}F'(y)\frac{\inf_{y\in[F^{-2}(a_2),a_2]}(y-F^{-1}(y))}{\sup_{y\in[a_1,F^2(a_1)]}(F(y)-y)}\geq \frac 1K.
\end{eqnarray*}
Here we used the fact that $f$ is $C^1$ close enough to $F$, and the intervals where we took the supremum and infimum are bounded away from $p$, where potential problems would arise. The other inequality is proved similarly.
\end{proof}

Choose a small interval $U$ around $p_n$, such that $(g_n^{b_n})''\neq 0$ in $\cup_{i=0}^{b_n-1}g_n^i(U)$ (remember that $p_n$ is a non-degenerate parabolic point). We can assume that also $(g_{a_n}^{b_n})''\neq 0$ in $\cup_{i=0}^{b_n-1}g_n^i(U)$, because it is arbitrarily close to $g_n^{b_n}$. As we remarked before, only a finite number of iterates of $p^{a_n}$ under $g_{a_n}$ are outside $\cup_{i=0}^{b_n-1}g_n^i(U)$, and this number is independent of $a_n$. Applying Lemma \ref{intervals} and the fact that the derivative of $g_{a_n}$ is uniformly bounded from below independently of $a_n$, we get that $(g_{a_n}^{b^{a_n}-1})'(g_{a_n}(p^{a_n}))>C$ for some constant $C$ independent of $a_n$ for all $a_n$ large enough. But because $p^{a_n}$ is a parabolic orbit, we get that
$$
g_{a_n}'(p^{a_n})=\frac{(g_{a_n}^{b^{a_n}})'(p^{a_n})}{(g_{a_n}^{b^{a_n}-1})'(g_{a_n}(p^{a_n}))}\leq \frac 1C
$$
for any $a_n$ large enough. But near $0$ the derivative of $g_{a_n}$ tends uniformly to infinity ($g_{a_n}'(t)=\frac 1r|t|^{1-\frac 1r}$), so we get that $|p^{a_n}|\geq\delta_0$ for some $\delta_0>0$ independent of $a_n$.
 
Remember the partition of $\s$ made by the points $\{ g_{a_n}^{kb_n}(p^{a_n}):\ 0\leq k\leq a_n-1\}$; assume that $0\in(p^{a_n},g_{a_n}^{lb_n}(p^{a_n}))=(p^{a_n},q^{a_n})$. Let $G:[g_{a_n}(p^{a_n}),g_{a_n}(q^{a_n})]\rightarrow [p^{a_n},q^{a_n}]$, $G=g_{a_n}^{b^{a_n}-1}$. Again we know that from the iterates $g_{a_n}^{ib_n}([p^{a_n},q^{a_n}])$, $0\leq i\leq a_n-1$, only finitely many (independent of $a_n$) are outside $\cup_{i=0}^{b_n-1}g_n^iU$, while the other are bounded away from $0$ by $\delta_0$ again independently of $a_n$, and using Lemma \ref{intervals} we can conclude that $G$ is bi-Lipschitz with some Lipschitz constant $L$ which is independent of $a_n$.

We have to prove that $G(c)$ and $G(d)$ are bounded away from $0$ independently from $a_n$. We will show that if one of them is too close to zero, then the positive divergence condition at the saddle and the Lipschitz bounds for $G$ would force the existence of another periodic orbit for $g_{a_n}$, which would be a contradiction. Let $\delta>0$ such that $2\delta<\frac 1L\delta^{\frac 1r}$. Assume that $0<G(c)<\delta$, the case $0>G(d)>-\delta$ is similar. Then $g_{a_n}(-\delta,0)=(c-\delta^{\frac 1r},c)$, and $g_{a_n}^{b^{a_n}}((-\delta,o))=(G(c-\delta^{\frac 1r}),G(c))$. But $G(c)-G(c-\delta^{\frac 1r})\geq \frac 1L\delta^{\frac 1r}$, and because $G(c)<\delta$ we get $G(c-\delta^{\frac 1r})<\delta-\frac 1L\delta^{\frac 1r}<-\delta$. But this shows that $[-\delta,0]\subset g_{a_n}^{b^{a_n}}((-\delta,0))$, and this is a contradiction because it would imply the existence of another periodic point for $g_{a_n}$.
\end{proof}

An interesting question is how the (physical) invariant measures depend on the flow, by considering families of flows on the torus with a saddle and a source. Clearly one could not expect continuity at flows corresponding to rational rotation numbers, because we may have an attracting and a repelling periodic orbit annihilating each other. But are the Cherry flows points of continuity?

There is some work on generalized Cherry flows, i.e. flows with several saddles and sinks or sources, possibly on higher genus surfaces, see for example \cite{MS}, \cite{MR}. It is possible that some of the results in this paper can be extended in some of these more general settings.

{\bf Acknowledgments:} The first author was partially supported by the Marie Curie grant IEF-234559, and would like to thank CRM Barcelona and IME-USP Sao Paulo for their hospitality, and Jiagang Yang for useful conversations. The second author was supported by the CNPq grant 301534/2008-0.

\bibliographystyle{plain}

\begin{thebibliography}{99}

\bibitem{GS}
{\sc J. Graczyk, L.B. Jonker, G. \'Swiatek, F.M. Tangerman and J.J.P. Veerman}, Differentiable circle maps with a flat interval {\it Comm. Math. Phys.} {\bf 173}, (1995), no. 3, 599--622.

\bibitem{MS}
{\sc M. Martens, S. van Strien, W. de Melo and P. Mendes}, On Cherry Flows, {\it Erg. Th. and Dyn. Sys.} {\bf 10}, (1990), 531--554.

\bibitem{M}
{\sc P. Mendes}, A metric property of Cherry vector fields on the torus, {\it Journal of Differential Equations} {\bf 89}, (1991), no.2, 305--316.

\bibitem{MR}
{\sc P.C. Moreira and A.A. Gaspar Ruas}, Metric Properties of Cherry Flows, {\it Journal of Differential Equations} {\bf 97}, (1992), no.1, 16--26.

\bibitem{Pa}
{\sc J. Palis},  A global perspective for non-conservative dynamics, {\it Ann. Inst. H. Poincar\'e Anal. Non Lin\'eaire} {\bf 22}, (2005), no. 4, 485--507.

\bibitem{SSV}
{\sc R. Saghin, W. Sun and E. Vargas}, On Dirac physical measures for transitive flows, {\it Comm. Math. Phys.} {\bf 298}, (2010), no. 3, 741--756.

\end{thebibliography}

\end{document}